\newcommand{\ZZ}{\mathbb Z}
\def\P{{\mathcal P}}
\newtheorem{theorem}{Theorem}[section]
\newtheorem{proposition}[theorem]{Proposition}
\newtheorem{lemma}[theorem]{Lemma}
\newtheorem{corollary}[theorem]{Corollary}
\theoremstyle{definition}
\newtheorem{definition}[theorem]{Definition}
\theoremstyle{remark}
\newtheorem{example}[theorem]{Example}
\theoremstyle{remark}
\begin{document}

\title{Orbit Equivalence Rigidity of Equicontinuous Systems}

\author{Mar\'{\i}a Isabel Cortez%
  \thanks{The research of the first author was supported by Anillo Research Project 1103 DySyRF and Fondecyt Research Project
1140213.}}
\affil{Departamento de Matem\'atica y Ciencia de la Computaci\'on, Universidad de Santiago de Chile \\  maria.cortez@usach.cl}

\author{Konstantin Medynets%
  \thanks{The second author was supported by NSA grant H98230CCC5334.}}
\affil{Department of Mathematics, United States Naval Academy  \\ medynets@usna.edu}

\date{Dated: \today}
\maketitle

\begin{abstract} The paper is focused on the study of continuous orbit equivalence for minimal equicontinuous systems.   We  establish that every equicontinuous system  is topologically conjugate to a profinite action, where the finite-index subgroups are not necessarily normal.  We then show that two profinite actions $(X,G)$ and $(Y,H)$ are continuously orbit equivalent if and only if the groups $G$ and $H$ are virtually isomorphic and the isomorphism preserves the structure of the finite-index subgroups defining the actions.  As a corollary, we obtain a dynamical classification of the restricted isomorphism between generalized Bunce-Deddens $C^*$-algebras.
We show that for  minimal equicontinuous $\mathbb Z^d$-systems continuous orbit equivalence implies that the systems are virtually piecewise conjugate.   This result extends Boyle's flip-conjugacy theorem.
We also show that the topological full group of a minimal equicontinuous system $(X,G)$  is amenable if and only if the group $G$ is amenable.
\end{abstract}

\section{Introduction}\label{SectionIntroduction}

Let $X$ be a compact topological space and $G$ be a group acting by homeomorphisms on $X$. The pair $(X,G)$ is called a {\it dynamical system}. In the paper all groups are assumed to be countable and topological spaces are homeomorphic to the Cantor set. The action of a group element $g\in G$ on a point $x\in X$ will be denoted by $g\cdot x$.

Let $(X,G)$ and $(Y,H)$ be dynamical systems. We say that $(X,G)$ and $(Y,H)$ are {\it orbit equivalent} if there is a homeomorphism $\psi : X\rightarrow Y$ establishing a bijection between $G$-orbits and $H$-orbits.  We say that the dynamical systems are {\it continuously orbit equivalent} if they are orbit equivalent and for any $x\in X$ and $g\in G$ there is $h\in H$ such that $g = \psi^{-1} \circ h\circ \psi$ on a {\it clopen neighborhood} of $x$.

The study of continuous orbit equivalence of dynamical systems is primarily motivated by its applications to the classification theory of crossed product $C^*$-algebras and geometric group theory. Let us denote by $C_r(X,G)$ the reduced crossed product $C^*$-algebra associated to a dynamical system $(X,G)$. It turns out that the $C^*$-algebras $C_r(X,G)$  and $C_r(Y,H)$ are isomorphic via an isomorphism mapping $C(X)$ onto $C(Y)$ if and only if the systems $(X,G)$ and $(Y,H)$ are continuously orbit equivalent. This result was originally established by T.~Giordano, I.~Putnam, C.~Skau in \cite[Theorem 2.4]{GPS95} for minimal $\mathbb Z$-actions, generalized to topologically free $\mathbb Z$-systems by J.~Tomiyama \cite[Theorem 2]{Tomiyama:1996}, and later to arbitrary topologically free dynamical systems by J.~Renault \cite[Proposition 4.13]{Renault:2008}, see also \cite[Theorem 5.1]{Matui:2012}.

 To any Cantor dynamical system $(X,G)$ we can associate a countable group $[[G]]$ defined as the set of all homeomorphisms of $X$ that locally coincide with elements of $G$. The group $[[G]]$ is termed the {\it topological full group of} $(X,G)$. No topology is assumed on $[[G]]$. The adjective ``topological'' is a historical term used to differentiate it from full groups arising in ergodic theory.  In \cite{GPS99}, the authors proved a topological version of the Dye theorem by showing that two minimal $\mathbb Z$-systems are continuously orbit equivalent if and only if their topological full groups are isomorphic as abstract groups. This result was generalized in \cite{Med11} to arbitrary, with some minor assumptions, group actions.  This reconstruction result implies that the structure of $G$-orbits can be fully recovered from algebraic properties of the associated full groups. We refer the reader to the paper \cite{GrigorchukMedynets} for a discussion of algebraic properties of full groups associated to minimal $\mathbb Z$-systems.

 We notice that topological full groups have recently found applications in geometric group theory -- they were used to construct the first examples of {\it infinite simple finitely generated amenable groups} \cite{JuschenkoMonod:2013}.

 The goal of this paper is to study continuous orbit equivalence for equicontinuous systems. Our focus on equicontinuous systems is motivated by the fact that they are, in a sense, primary building blocks for general dynamical systems as every dynamical system has a maximal equicontinuous factor.

Let $G$ be a residually finite group and $\{G_n\}_{n\geq 0}$ be a nested sequence of finite index subgroups. {\it We do not assume that these subgroups are normal}. The group $G$ has a profinite  action  on the coset tree, the inverse limit, $X = \lim (G/G_n,\pi_n)$, where $\pi_n: G/G_{n}\rightarrow G/G_{n-1}$ is the natural quotient map. We call this profinite action a {\it $G$-odometer}. If the  subgroups $\{G_n\}_{n\geq 1}$ are normal, then we call $(X,G)$ an {\it exact $G$-odometer}.  We borrowed this terminology (though we slightly modified it) from \cite{CP} and \cite{Cor06}. We note that every $G$-odometer is equicontinuous.

  In \cite{Huang:1979} Huang showed that if a group $G$ admits an effective  equicontinuous action on a compact set, then $G$ must necessarily be maximally almost periodic.  Every maximally almost periodic group $G$ is embeddable into its   Bohr compactification \cite[p. 158]{Loomis:Book}, which is a compact group. Using the Peter-Weyl and Mal'cev theorems one can show that every finitely generated maximally almost periodic group is also  residually finite. In Theorem \ref{TheoremEquicontinuousConjugateOdometers} we  will give a direct proof of the fact that if a dynamical system $(X,G)$ is free, minimal, and equicontinuous, then the group $G$, which is not assumed to be finitely generated, is necessarily residually finite. We will also show that $(X,G)$ is topologically conjugate to a $G$-odometer.

In \cite{Boyle:1983}, \cite{BoyleTomiyama:1998}, M.~Boyle and J.~Tomiyama showed that  continuous orbit equivalence of minimal $\mathbb Z$-systems   is equivalent to  the flip-conjugacy of dynamical systems (conjugacy up to time reversal) (Definition \ref{DefinitionConjugacy}). In the literature such results  are often referred to as {\it Rigidity Theorems}. The main goal of the paper is to establish a rigidity theorem  for equicontinuous systems. We show that  the class of continuous orbit equivalence for an equicontinuous system consists of the systems virtually conjugate to it.

The following are the main results of the paper. The proofs are given in  Section \ref{SectionRegidityResults}.   Recall that a dynamical system $(X,G)$ is called {\it free} if $g\cdot x =x$, $x\in X$, $g\in G$, implies that $g= e$, the group identity.

\begin{theorem} Let $(Y,H)$ be a free dynamical system. If  $(Y,H)$ is continuously orbit equivalent to a free odometer $(X,G)$ with $G$ a finitely generated group, then $(Y,H)$ is an odometer and the groups $G$ and $H$ are commensurable.
\end{theorem}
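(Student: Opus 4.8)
The plan is to transport the odometer structure of $(X,G)$ across $\psi$ by means of the continuous orbit cocycle, first recognising $(Y,H)$ as an odometer and then matching the defining towers of finite-index subgroups. First I would record the basic apparatus. By Theorem \ref{TheoremEquicontinuousConjugateOdometers} I may assume $(X,G)$ is the $G$-odometer of a chain $G=G_0\ge G_1\ge\cdots$ of finite-index subgroups with $\bigcap_n G_n=\{e\}$, minimal and free, with base point $x_0$, level-$n$ cylinders $U_n=[G_n]$ and clopen partitions $\P_n=\{g U_n : gG_n\in G/G_n\}$. Freeness of both systems lets me define the orbit cocycle $\alpha\colon G\times X\to H$ by $\psi(g\cdot x)=\alpha(g,x)\cdot\psi(x)$; the continuity clause of continuous orbit equivalence makes each $\alpha(g,\cdot)$ locally constant, and the identity $\alpha(g g',x)=\alpha(g,g'x)\alpha(g',x)$ holds. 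Because the $H$-action is free, for fixed $x$ the map $g\mapsto\alpha(g,x)$ is a bijection $G\to H$. Fixing a finite generating set $S$ of $G$, each $\alpha(s,\cdot)$ takes finitely many values, and by the cocycle identity these values generate $H$; hence $H$ is finitely generated. Minimality of $(Y,H)$ is immediate, since $\psi$ carries dense $G$-orbits to dense $H$-orbits.

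Next I would prove that $(Y,H)$ is equicontinuous. Conjugating by $\psi$ turns every $h\in H$ into a homeomorphism $\tilde h=\psi^{-1}\circ h\circ\psi$ of $X$ that preserves $G$-orbits and, by continuous orbit equivalence, locally agrees with elements of $G$; thus $\tilde H:=\psi^{-1}H\psi$ is a subgroup of the topological full group $[[G]]$. Writing $\tilde h(x)=\beta(h,\psi(x))\cdot x$ for the reverse cocycle $\beta$, local constancy together with finite generation of $H$ gives a level $L$ such that each generator $\tilde t$ is, on every level-$M$ cylinder with $M\ge L$, equal to a single group element $\gamma\in G$; since $\gamma$ maps cylinders to cylinders in the odometer, $\tilde t$ permutes the finite set $\P_M$. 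Consequently $\tilde H$ acts on each finite set $\P_M$ ($M\ge L$) by permutations, so the $H$-orbit of any clopen subset of $Y$ is finite, its $\psi$-preimage being a union of level-$M$ cylinders for large $M$ with finite $\tilde H$-orbit. A minimal Cantor system all of whose clopen sets have finite orbit is equicontinuous, so $(Y,H)$ is free, minimal and equicontinuous, and Theorem \ref{TheoremEquicontinuousConjugateOdometers} identifies it with an $H$-odometer.

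I would then build the matching towers. The permutation action of $\tilde H$ on $\P_M$ is transitive, since the dense orbit of $x_0$ meets every cylinder, with the stabiliser of $U_M$ equal to $K_M:=\{\tilde h:\tilde h U_M=U_M\}$, which under $H\cong\tilde H$ corresponds to $H_M:=\mathrm{Stab}_H(\psi(U_M))$. Orbit--stabiliser gives $[H:H_M]=|\P_M|=[G:G_M]$, the translates $\{\tilde h\,U_M\}$ exhaust $\P_M$, and applying $\psi$ yields a tiling $Y=\bigsqcup_{hH_M}hV_M$ with $V_M=\psi(U_M)$; the chains $\{G_M\}$ and $\{H_M\}$ are nested with equal successive indices. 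Restricting to the base cylinder, $K_L$ acts on $U_L$ through $[[G_L]]$, freely (a fixed point would force $h=e$ by freeness of $(Y,H)$) and minimally, and the tiling shows it has exactly the same orbits as $G_L$; thus $U_L$ is simultaneously the $G_L$-odometer of $\{G_M\}_{M\ge L}$ and the $K_L\cong H_L$-odometer of $\{H_M\}_{M\ge L}$, realised on one and the same coset tree.

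Finally, commensurability amounts to extracting from this coincidence a genuine group isomorphism $G_L\cong H_L$ between the finite-index subgroups $G_L\le G$ and $H_L\le H$. This is the step I expect to be the main obstacle, because restricted continuous orbit equivalence by itself only equates orbits and full groups, not the acting groups, so the passage to an isomorphism must use the profinite structure. Concretely, I would compare the two free minimal equicontinuous actions of $G_L$ and $K_L$ on the common space $U_L$: both have the same lattice of clopen cylinders and the same orbit equivalence relation, so the cocycle between them is governed by the inverse system $\varprojlim G_L/G_M=\varprojlim K_L/K_M$. Analysing how the locally constant cocycle descends through this tower, and using that $\bigcap_M G_M=\bigcap_M K_M=\{e\}$ together with finite generation of $G_L$, should force the cocycle to be cohomologous to a homomorphism, turning the restricted orbit equivalence into a conjugacy and hence producing the isomorphism $G_L\cong H_L$. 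Since $[G:G_L]$ and $[H:H_L]$ are finite, this establishes that $G$ and $H$ are commensurable.
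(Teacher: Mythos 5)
Your first half is essentially correct, and it reaches the conclusion ``$(Y,H)$ is an odometer'' by a route that differs mildly from the paper's: the paper never proves equicontinuity of the unknown system directly, but instead shows the orbit cocycle is constant on atoms of a fine coset partition and feeds this into Lemma \ref{LemmaConjugacyInducedOdometers} on induced odometers; you instead show that $\tilde H=\psi^{-1}H\psi\subset[[G]]$ permutes each cylinder partition $\P_M$, deduce that every clopen set has a finite orbit, and invoke the fact (true, but asserted without proof --- it needs the invariant-refinement plus Lebesgue-number argument) that this forces equicontinuity, after which Theorem \ref{TheoremEquicontinuousConjugateOdometers} applies. Your observation that $H$ must be finitely generated, via the finitely many values of the generator cocycles, is also correct and is a point the paper's formulation does not even need.

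The genuine gap is the final step --- the one you yourself flag as ``the main obstacle'': you never produce the isomorphism between $H_L$ and $G_L$, only the hope that the cocycle ``should'' be cohomologous to a homomorphism after an unspecified analysis of the inverse system. That step is not a loose end; it is the entire content of the rigidity statement, and the cohomological mechanism you propose is not the one that works (no transfer function is needed or available in this generality). What you are missing is that the ingredients you already have assemble into the isomorphism in a few lines, and this is exactly the paper's mechanism. Write $b(h,x)\in G$ for the unique element with $\tilde h(x)=b(h,x)\cdot x$. You have shown each generator's cocycle is constant on every level-$L$ cylinder; since every value of the cocycle lies in $G$ and $G$ permutes the atoms of $\P_L$, an induction on word length using the cocycle identity (take the generating set symmetric) shows that $b(h,\cdot)$ is constant on every level-$L$ cylinder for \emph{every} $h\in H$. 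Now restrict to the return group: if $\tilde h_1,\tilde h_2\in K_L$ and $x\in U_L$, then $\tilde h_2\cdot x\in U_L$, so constancy on $U_L$ gives
$$b(h_1h_2,x)=b(h_1,\tilde h_2\cdot x)\,b(h_2,x)=b(h_1,x)\,b(h_2,x),$$
i.e.\ $\theta(h):=b(h,\cdot)|_{U_L}$ is a homomorphism $K_L\to G$ \emph{on the nose}, no cohomology involved. Its image lies in $G_L$ (the return times of $U_L$), it is injective because the $H$-action is free (the constant $e$ would make $\tilde h$ fix $U_L$ pointwise), and it is surjective onto $G_L$ because the two actions share orbits: given $g\in G_L$ and $x\in U_L$, the element $\tilde h\in\tilde H$ with $\tilde h x=g\cdot x$ maps a point of $U_L$ into $U_L$, hence lies in $K_L$, and $\theta(\tilde h)=g$ by freeness of the $G$-action. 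Thus $H_L\cong K_L\cong G_L$ with $[G:G_L]$ and $[H:H_L]$ finite, which is the commensurability claim. Without this argument your proposal proves the first assertion of the theorem but not the second.
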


\begin{theorem}\label{TheoremMainIntro} Let $(X,G)$ and $(Y,H)$ be free odometers. Then the following are equivalent:

(1) $(X,G)$ and $(Y,H)$ are continuously orbit equivalent.

(2) The topological full groups $[[G]]$ and $[[H]]$ are isomorphic.

(3) $C_r(X,G)$  and $C_r(Y,H)$  are isomorphic via an isomorphism mapping $C(X)$ onto $C(Y)$.

(4) There exist nested sequences of finite index subgroups  $\{G_n\}_{n\geq 0}$ and $\{H_n\}_{n\geq 0}$ determining the structure of $(X,G)$ and $(Y,H)$ as odometers, respectively, and a group isomorphism $\theta : H_0\rightarrow G_0$ such that $[G:G_0] = [H:H_0]$ and $\theta(H_n) = G_n$ for every $n\geq 0$.

\end{theorem}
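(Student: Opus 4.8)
The plan is to treat the three equivalences $(1)\Leftrightarrow(2)\Leftrightarrow(3)$ as formal consequences of known reconstruction theorems and to concentrate the real work on $(1)\Leftrightarrow(4)$. A free odometer is minimal and free, hence topologically free, so the hypotheses of the cited results are met: the equivalence $(1)\Leftrightarrow(3)$ is then Renault's theorem \cite[Proposition 4.13]{Renault:2008} (see also \cite[Theorem 5.1]{Matui:2012}), and $(1)\Leftrightarrow(2)$ is the reconstruction theorem of \cite{GPS99, Med11}, after one checks that free odometers satisfy the mild hypotheses required there. Thus it remains to prove $(1)\Leftrightarrow(4)$, and everything below is phrased for the presentations $X=\varprojlim(G/G_n,\pi_n)$ and $Y=\varprojlim(H/H_n,\rho_n)$ with their clopen cylinder bases $\{[aG_n]\}$ and $\{[bH_n]\}$.

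For $(4)\Rightarrow(1)$ I would argue constructively. The condition $\theta(H_n)=G_n$ for all $n$ means that $\theta$ induces an isomorphism of the coset trees $\{H_0/H_n\}$ and $\{G_0/G_n\}$, hence a homeomorphism $\phi_0$ of the sub-odometers $[H_0]=\varprojlim(H_0/H_n)$ and $[G_0]=\varprojlim(G_0/G_n)$ satisfying $\phi_0(h\cdot y)=\theta(h)\cdot\phi_0(y)$; that is, $\phi_0$ is a genuine topological conjugacy of the $H_0$- and $G_0$-sub-odometers implemented by $\theta$. Using $[G:G_0]=[H:H_0]=:k$, I would fix transversals for $G_0$ and $H_0$, decompose $X$ and $Y$ into the $k$ cylinders lying over $G/G_0$ and $H/H_0$, and match them through $\phi_0$. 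The resulting homeomorphism $\psi\colon X\to Y$ is an orbit equivalence because inside each level-$0$ cylinder the $G$-orbit relation coincides with the $G_0$-orbit relation, which $\phi_0$ preserves; continuity of the associated cocycle follows since it is determined entirely by the finite permutation actions $G\curvearrowright G/G_0$ and $H\curvearrowright H/H_0$ together with $\theta$, hence is locally constant.

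For $(1)\Rightarrow(4)$, which is the heart of the matter, I would start from a continuous orbit equivalence $\psi\colon X\to Y$ and the locally constant cocycle $\alpha\colon G\times X\to H$ determined by $\psi(g\cdot x)=\alpha(g,x)\cdot\psi(x)$, together with its inverse cocycle; after composing with a suitable element of the structure group I may assume $\psi$ maps the base point of $X$ to that of $Y$. The first step is a back-and-forth argument: continuity of $\psi$ and $\psi^{-1}$ forces, for each level $m$ of $Y$, a level $n$ of $X$ whose cylinder partition refines $\psi^{-1}$ of the level-$m$ partition, and symmetrically; interleaving these produces subsequences of levels along which $\psi$ carries cylinders exactly onto cylinders, $\psi([aG_{n_k}])=[bH_{m_k}]$. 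Re-indexing yields defining sequences $\{G_n\}$, $\{H_n\}$ matched by $\psi$ level by level. Counting the top-level cylinders — equivalently, transporting the unique invariant measure, which is preserved because continuous orbit equivalence induces a groupoid isomorphism — gives $[G:G_0]=[H:H_0]$, and reading off the stabilizers of the matched cylinders supplies candidate finite-index subgroups with the correct nesting.

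The main obstacle is the final promotion of this matched tree isomorphism to a genuine group isomorphism $\theta\colon H_0\to G_0$ with $\theta(H_n)=G_n$. A priori, conjugation by $\psi$ only sends each $g\in G_0$ to an element of the topological full group $[[H_0]]$, i.e. to a piecewise $H_0$-translation rather than a single one, and there is no formal reason for these pieces to coalesce. This is exactly where equicontinuity must be used: the matching of all cylinder levels shows that $\psi$ conjugates the profinite structure group $\overline{G_0}$ onto $\overline{H_0}$, so each $g$ is carried to a single translation by an element of $\overline{H_0}$; freeness of the actions together with the local constancy of $\alpha$ then forces, on a sufficiently deep sub-odometer, the cocycle $\alpha(g,\cdot)$ to be constant with value in the discrete group $H_0$, yielding a well-defined homomorphism. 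I expect the delicate part to be showing that these constant values land in $H$ rather than merely in its closure $\overline{H}$, and that the resulting map is simultaneously bijective and compatible with the filtration. Once this is done, the relation $\psi\,g\,\psi^{-1}=\theta^{-1}(g)$ on the matched sub-odometer upgrades $\psi$ to a conjugacy, and $\theta(H_n)=G_n$ is read off from the matched cylinder stabilizers.
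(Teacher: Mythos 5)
Your reductions $(1)\Leftrightarrow(2)$ and $(1)\Leftrightarrow(3)$ via the reconstruction theorems, and your proof of $(4)\Rightarrow(1)$ (extend the $\theta$-conjugacy $\phi_0$ of the sub-odometers $(D_0,H_0)$, $(C_0,G_0)$ across the level-$0$ cylinders by matched coset transversals, then observe the resulting cocycle is locally constant), are sound and essentially identical to the paper's argument. The problem is $(1)\Rightarrow(4)$, where your sketch contains a step that is actually false and, downstream of it, a gap that you yourself flag but do not close. The back-and-forth claim --- that interleaving the refinement relations between the cylinder partitions of $X$ and $\psi^{-1}$ of those of $Y$ ``produces subsequences of levels along which $\psi$ carries cylinders exactly onto cylinders'' --- does not follow: refinement chains never collapse to equalities. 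Worse, that step uses only that $\psi$ is a homeomorphism between Cantor sets (every such homeomorphism admits interleaved refining partitions), so if it were valid it would prove that \emph{any} two odometers have matched cylinder structures, which is absurd; the orbit structure never enters. Concretely, composing an orbit equivalence with an element of the topological full group generally destroys the property of mapping cylinders onto cylinders, so no choice of levels can be forced by continuity alone. Everything you build on this --- in particular the claim that $\psi$ conjugates the profinite structure group $\overline{G_0}$ onto $\overline{H_0}$, so that $\psi g\psi^{-1}$ is a \emph{single} translation rather than a piecewise one --- is therefore unsupported, and you candidly admit the remaining difficulty (getting values in $H$ rather than its closure, and getting a bijective homomorphism) without resolving it.

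The paper's mechanism for this implication is different and is the actual key idea, which your proposal is missing. Transport both actions to the same space, let $f:H\times X\to G$ be the orbit cocycle, and fix a \emph{finite symmetric generating set} $\{s_1,\dots,s_r\}$ of $H$ (finite generation is genuinely used here). Choose one clopen partition $\{O_j\}$ on which every $f(s_i,\cdot)$ is constant, and then use \emph{equicontinuity} of the transported $H$-action to find a finer clopen partition $\{U_i\}$ such that points in the same $U_i$ remain within the Lebesgue number of $\{O_j\}$ under all of $H$; the cocycle identity applied to words in the generators then shows $f(h,\cdot)$ is constant on each $U_i$ \emph{uniformly over all} $h\in H$. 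Now take a coset partition $H/H_n$ refining $\{U_i\}$, let $C_0=[e]_n$ and $H'=H_n$: on $C_0$ the map $\theta(h):=f(h,x)$ is independent of $x$, is a homomorphism on $H'$ by the cocycle identity and $H'$-invariance of $C_0$, and is injective by freeness. A coset count gives $[G:\theta(H')]=[H:H']<\infty$, the sets $f(h_j,x)\cdot C_0$ partition $X$, and Lemma \ref{LemmaConjugacyInducedOdometers} then exhibits $(X,G)$ as a $G$-odometer structurally conjugate to $(Y,H)$ via $G_m=\theta(H_m)$. No claim that $\psi$ matches cylinders to cylinders is ever needed; the group isomorphism is extracted directly from the cocycle, not from a tree isomorphism. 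If you want to repair your write-up, this cocycle-constancy argument is the ingredient to substitute for your back-and-forth step.
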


We note that the crossed product $C^*$-algebras associated to $G$-odometers  were earlier studied by Orfanos \cite{Orfanos:2010} and were termed   {\it generalized Bunce-Deddens algebras}. These algebras coincide with classical Bunce-Deddens algebras whenever $G=\mathbb Z$ \cite[Section V.3]{Davidson:Book}.

As a corollary of the main results, we obtain that for minimal equicontinuous $\mathbb Z^d$-actions the  continuous orbit equivalence implies  virtual piecewise conjugacy (Theorem \ref{TheoremRigidityZnOdometers}).

 One of the big open problems in the topological orbit equivalence theory is to describe systems whose topological full groups are amenable. We note that topological full groups of minimal $\mathbb Z$-systems are amenable \cite{JuschenkoMonod:2013}, see also \cite{JuschenkoNekrashevychSalle}. However, there are minimal $\mathbb Z^2$-systems with non-amenable full groups \cite{ElekMonod:2013}. In the following theorem we show that  equicontinuous minimal systems have amenable topological full groups whenever the acting group is amenable. This, in particular, implies that (1) the topological full group of  a direct product of $\mathbb Z$-odometers is amenable; (2) topological full groups of Heisenberg odometers are amenable, see \cite{LightwoodSahinUgarcovici:2014} for more details on Heisenberg odometers.

\begin{theorem}[Corollary \ref{CorollaryFullGroupsAmenable}] Let $(X,G)$  be a free minimal equicontinuous system. Then the topological full group $[[G]]$ is amenable if and only if the  group $G$ is amenable.
\end{theorem}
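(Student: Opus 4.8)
The plan is to first reduce to the case of a $G$-odometer and then to exhibit the topological full group as an increasing union of amenable groups. By Theorem \ref{TheoremEquicontinuousConjugateOdometers}, the system $(X,G)$ is topologically conjugate to a free $G$-odometer $X=\varprojlim(G/G_n,\pi_n)$, and since a topological conjugacy induces an isomorphism of topological full groups, I may assume $(X,G)$ is this odometer. The implication ``$[[G]]$ amenable $\Rightarrow G$ amenable'' is immediate: freeness of the action makes the canonical homomorphism $G\to [[G]]$, $g\mapsto (x\mapsto g\cdot x)$, injective, and subgroups of amenable groups are amenable. The content is the converse.

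For the level-$n$ clopen partition $\{C_{gG_n}: gG_n\in G/G_n\}$ of $X$ into cylinders, let $\Gamma_n\le [[G]]$ be the set of $\gamma$ that restrict to a single element of $G$ on each $C_{gG_n}$. A direct computation with composition shows that $\Gamma_n$ is a subgroup, that $\Gamma_n\subseteq\Gamma_{n+1}$ (a level-$n$ cylinder is a union of level-$(n+1)$ cylinders, on each of which $\gamma$ still acts by the same group element), and, using that the cylinders form a clopen basis together with compactness of $X$, that $[[G]]=\bigcup_n\Gamma_n$. Since amenability passes to directed unions of groups, it suffices to prove that each $\Gamma_n$ is amenable.

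Each $\gamma\in\Gamma_n$ carries $C_{gG_n}$ onto a cylinder $C_{f(gG_n)gG_n}$, and the assignment $gG_n\mapsto f(gG_n)gG_n$ defines a homomorphism $\Phi:\Gamma_n\to \mathrm{Sym}(G/G_n)$ into a finite group. Its kernel consists of the $\gamma$ fixing every cylinder setwise; on $C_{gG_n}$ such a $\gamma$ acts by an element of the setwise stabilizer $gG_ng^{-1}$, and freeness guarantees that this element is unique and that distinct choices on distinct cylinders yield distinct homeomorphisms. Hence $\ker\Phi$ is isomorphic to the finite direct product $\prod_{gG_n\in G/G_n} gG_ng^{-1}$, each factor being a conjugate of, and so isomorphic to, $G_n$. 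Identifying this structure precisely, which is exactly where freeness is essential, is the main step.

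Finally, assume $G$ is amenable. Then $G_n$ and all its conjugates are amenable, so the finite product $\ker\Phi$ is amenable; as $\Gamma_n/\ker\Phi$ embeds in the finite group $\mathrm{Sym}(G/G_n)$, the group $\Gamma_n$ is an extension of a finite group by an amenable group and is therefore amenable. Since $[[G]]=\bigcup_n\Gamma_n$ is a directed union of amenable groups, $[[G]]$ is amenable, completing the proof. The only delicate points are the compactness argument establishing $[[G]]=\bigcup_n\Gamma_n$ and the exact identification of $\ker\Phi$; everything else is routine permanence of amenability.
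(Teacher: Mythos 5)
Your proof is correct, and it differs from the paper's in one genuine and interesting respect. Both arguments share the same skeleton: reduce to a free $G$-odometer via Theorem \ref{TheoremEquicontinuousConjugateOdometers}, write $[[G]]$ as the increasing union of the subgroups compatible with the level-$n$ cylinder partitions (your $\Gamma_n$ are exactly the paper's $[[G]]_n$ of Proposition \ref{PropositionFullGroupsIncreasingUnion}), and show each such subgroup is an extension of a finite permutation group by an amenable kernel. The difference is how the non-exact case (non-normal $G_n$) is handled. The paper proves its structure result, Proposition \ref{PropositionOdometer-direct-limit}, only for \emph{exact} odometers, where normality gives $\mathrm{Ker}\cong G_n^{[G:G_n]}$ and the clean semidirect product $G_n^{[G:G_n]}\rtimes S_{[G:G_n]}$; it then deduces the general case (Corollary \ref{CorollaryFullGroupsAmenable}) by realizing the odometer as a factor of an exact odometer via \cite[Proposition 1]{CP} and embedding its full group into that of the cover via Lemma \ref{LemmaEmbeddingFullGroups}. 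You instead work directly with the possibly non-normal $G_n$, observing that the setwise stabilizer of the cylinder $C_{gG_n}$ is the conjugate $gG_ng^{-1}$, so that $\ker\Phi$ embeds in (in fact equals) $\prod_{gG_n\in G/G_n}gG_ng^{-1}$, a finite product of groups isomorphic to $G_n$, which is all that amenability requires. Your route is more self-contained: it avoids both the exact-odometer cover and the factor-embedding lemma. What the paper's route buys is the explicit structural description of $[[G]]$ for exact odometers as an inductive limit of the groups $G_n^{[G:G_n]}\rtimes S_{[G:G_n]}$, which is of independent interest beyond the amenability statement. One small remark: for amenability you only need injectivity of the map $\ker\Phi\rightarrow\prod_{gG_n}gG_ng^{-1}$ (which freeness gives, exactly as you say), so the surjectivity half of your "exact identification" of the kernel, while true, can be skipped entirely.
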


  Historically, the orbit equivalence rigidity phenomena were first discovered in the measurable dynamics, see, for example,  \cite{Ioana:2011} and references therein. In  \cite{Ioana:2011}, Ioana studies the measurable cocycle superrigidity for profinite actions of property $(T)$ groups. We note that, in spirit, his results have some similarities with ours, though their scopes and the  techniques employed  are completely different.  We would also like to mention a recent preprint  \cite{Li:2015}, where the author establishes a number of  rigidity results for various topological dynamical systems.

The structure of the paper is as follows. In section \ref{SectionEquicontinuousSystems} we show that every minimal equicontinuous system is conjugate to a $G$-odometer. The main results of the paper are established in Section \ref{SectionRegidityResults}. Section \ref{SectionTopFullGroups} is devoted to the study of topological full groups associated with odometers.

{\bf Acknowledgement:} This project was started when the second-named author visited  the mathematics department of Universidad de Santiago de Chile. He  would like to thank the department for the hospitality during his visit.


\section{Equicontinuous systems}\label{SectionEquicontinuousSystems}

A dynamical system $(X,G)$ is called {\it effective} if for each $g\in G$, $g\neq e$, there exists $x\in X$ such that $g\cdot x\neq x$.  A dynamical system $(X,G)$ is called {\it minimal} if every $G$-orbit is dense in $X$.  A subset $Y\subset X$ is called a {\it minimal component} if $Y$ is $G$-invariant and $(Y,G)$ is minimal.

\begin{definition}\label{DefinitionConjugacy} (1) Dynamical systems $(X_1,G)$ and $(X_2,G)$ are called {\it conjugate} if there exists a homeomorphism $\psi:X_1 \rightarrow X_2$ such that $\psi(g\cdot x) = g\cdot \psi(x)$ for every $g\in G$ and $x\in X_1$.

(2) Dynamical systems $(X_1,G_1)$ and $(X_2,G_2)$ are called {\it conjugate up to a group isomorphism}, if there exist a group isomorphism  $\theta : G_1\rightarrow G_2$ and a homeomorphism $\psi: X_1\rightarrow X_2$ such that $\psi(g\cdot x) = \theta{(g)}\cdot \psi(x)$ for every $g\in G_1$ and $x\in X_1$. In this case, we will also say that {\it the systems are $\theta$-conjugate}.
\end{definition}

\begin{definition} A dynamical system $(X,G)$ is called {\it equicontinuous} if the collection of maps defined by the action of $G$ is  uniformly equicontinuous, i.e., if for every $\varepsilon>0$ there exists $\delta>0$ such that $d(x,y)\leq \delta$ implies  $d(g\cdot x, g\cdot y) <\varepsilon$, for every $g\in G$.
\end{definition}

Every equicontinuous system is a disjoint union of its minimal components,  see Corollary 10 in Ch.1 and Theorem 2 in Ch.2 in \cite{Au}.
 In  \cite{AGW} the authors showed that every distal action of a finitely generated group on a compact zero-dimensional metric space is equicontinuous.

\begin{example}\label{ExampleModelEquicontinuous} Consider a discrete group $G$. Suppose that there exists a homomorphism  $\varphi: G\to K$ into a compact group $K$ such that $\varphi(G)$ is dense in $K$. Fix  a closed subgroup $H$ of $K$. Define an action of $G$  on the left cosets $K/H$ as follows
$$
g\cdot( kH)=\varphi(g)kH, \mbox{ for every } k\in K \mbox{ and } g\in G.
$$
The system $(K/H,G)$ is minimal and equicontinuous \cite[Page 39]{Au}. According to Theorem 6 in  \cite[Ch.3]{Au},  every minimal equicontinuous system is conjugate to a system of the form $(K/H,G)$.
\end{example}

 Let $G$ be a group and $\{G_n\}_{n\geq 0}$ be a decreasing sequence of finite-index subgroups (not necessarily normal). Let $\pi_n : G/G_{n}\rightarrow G/G_{n-1}$ be the natural quotient map.  Consider the inverse limit $X = \lim_n(G/G_n,\pi_n).$ We remind that $X$ consists of tuples $(g_0,g_1,g_2,\ldots) \in \prod_{n=0}^\infty G/G_n$ such that $\pi_n(g_{n}) = g_{n-1}$ for all $n\geq 1$.  The topology on $X$ is generated by the clopen sets $\{\{g_n\}\in X : g_i = a_i \}$, where $a_i\in G/G_i$.

 The group $G$ acts continuously on $X$ by left multiplication, i.e.,  the action is defined by $g\cdot \{h_i\}:=\{g h_i\}$, where  $g\in G$ and $\{h_i\}\in X$.   We note that the dynamical system $(X,G)$ is minimal and equicontinuous  \cite[Ch.2]{Au}. However,  the action of $G$ on $X$ does not always have to be free.

 If the  subgroups $\{G_i\}$ are normal in $G$, then  $X$ is a profinite group and there is a natural homomorphism $\tau: G \rightarrow X$ that defines the action of $G$   on $X$. In this case,  the action of $G$ is free  if and only if $\bigcap_{n = 0}^\infty G_n = \{e\}$.
  Indeed,  $\tau(g) = e$ iff for every  $g \in G_i$ for all $i\geq 0$. Thus, $\tau$ is an embedding if and only if $\bigcap_{i\geq 0}G_i = \{e\}$.

\begin{definition} Let $G$ be a group and $\{G_n\}$ be a sequence of finite-index subgroups and let $X$ be the inverse limit as above.  We will call the dynamical system $(X,G)$ a $G$-{\it odometer} or, simply, an {\it odometer} when the group $G$ is clear from the context. If the finite-index subgroups $\{G_n\}$   are normal, then we call $(X,G)$ an {\it exact $G$-odometer}.
  \end{definition}

 Every $G$-odometer is a factor of an exact $G$-odometer \cite[Proposition 1]{CP}.
We note that odometers are sometimes referred to as {\it profinite actions}  \cite{AbertAlek:2012}, \cite{Ioana:2011}.

 \begin{definition} Let $(X,G)$ be a dynamical system.

 (1)  For a subset $U\subset X$ and point $x\in X$, the set of {\it return times of the point $x$ to $U$} is defined as  $T_U(x) = \{g\in G | g\cdot x\in U\}$.

 (2) A point $x\in X$ is said to be {\it regularly recurrent}  if for every clopen neighborhood $U$ of $x$ there exists a finite-index subgroup $K\subset G$ such that $K\subset T_U(x)$.
\end{definition}

\begin{proposition}[Theorem 2 in \cite{CP}]\label{PropositionRegularlyRecurrent} Let $(X,G)$ be a minimal system whose every point is regularly recurrent. Then $(X,G)$ is topologically conjugate to a $G$-odometer.
\end{proposition}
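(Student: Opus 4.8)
The plan is to realize $(X,G)$ as an inverse limit of finite transitive $G$-systems, which is precisely the data defining a $G$-odometer. Fix a point $x_0\in X$ and, using that $X$ is a Cantor set, choose a decreasing sequence of clopen neighborhoods $U_0=X\supseteq U_1\supseteq U_2\supseteq\cdots$ with $\bigcap_n U_n=\{x_0\}$. Since $x_0$ is regularly recurrent, for each $n$ I can pick a finite-index subgroup contained in the return-time set $T_{U_n}(x_0)$; after intersecting the first $n$ of these and passing to the normal core (a finite intersection of conjugates, hence still of finite index) I may assume I have a decreasing sequence of finite-index normal subgroups $K_n$ with $K_n\cdot x_0\subseteq U_n$.

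The heart of the argument is to convert this return-time data into a refining sequence of finite $G$-invariant clopen partitions $\mathcal{P}_n$ of $X$ that separates points. Concretely, for each $n$ I want a clopen set $V_n$ with $x_0\in V_n\subseteq U_n$ whose $G$-translates form a finite partition of $X$: that is, $g\cdot V_n=V_n$ whenever $g$ lies in a finite-index subgroup $G_n$, and $g\cdot V_n\cap V_n=\emptyset$ otherwise. Granting such $V_n$, the distinct translates are indexed by the finite set $G/G_n$; they cover $X$ because $\bigcup_{g}g\cdot V_n$ is a nonempty $G$-invariant clopen set and $(X,G)$ is minimal; and $G$ permutes them exactly as it permutes $G/G_n$, with $G_n$ the stabilizer of the atom $V_n\ni x_0$. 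Choosing $V_{n+1}\subseteq V_n$ forces $\mathcal{P}_{n+1}$ to refine $\mathcal{P}_n$, which gives $G_{n+1}\subseteq G_n$ together with the equivariant quotient maps $\pi_n:G/G_n\to G/G_{n-1}$.

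With these partitions in hand the conclusion is formal. The map $\psi:X\to\lim_n(G/G_n,\pi_n)$ sending a point $y$ to the coherent sequence of atoms containing it (each atom of $\mathcal{P}_n$ identified with its coset in $G/G_n$) is continuous and $G$-equivariant by construction. Provided the partitions separate points, $\psi$ is injective, and a continuous bijection between compact Hausdorff spaces is a homeomorphism; thus $\psi$ exhibits $(X,G)$ as topologically conjugate to the $G$-odometer $\lim_n(G/G_n,\pi_n)$, as required.

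The main obstacle is the construction of the clopen sets $V_n$, and this is exactly where I expect the real work and where the full strength of the hypothesis is used. Regular recurrence of $x_0$ supplies the finite-index subgroups $K_n$ that force the relevant orbit of partitions to be finite rather than a genuinely infinite, Toeplitz-type object; the natural candidate for the atom is the closed $K_n$-invariant set $\{y: K_n\cdot y\subseteq U_n\}$, and the delicate step is upgrading such a closed invariant set to a clopen one with pairwise disjoint $G$-translates. This is precisely the issue of injectivity of $\psi$: regular recurrence of the single point $x_0$ only yields an almost one-to-one factor map onto the odometer, and overlapping translates (equivalently, distinct points with identical itineraries) are exactly the obstruction to this factor map being a conjugacy. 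I therefore expect to spend most of the effort showing that the assumption that \emph{every} point of $X$ is regularly recurrent — rather than just $x_0$ — rules out these overlaps and lets one refine the $\mathcal{P}_n$ so that their mesh tends to zero, thereby separating all points and turning the almost one-to-one factor map into an honest conjugacy; compactness of $X$ together with regular recurrence applied at each point of $X$ should drive this final passage.
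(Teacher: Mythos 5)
Your proposal correctly identifies the right framework (an itinerary map into an inverse limit of coset spaces coming from invariant clopen partitions), but it is not a proof: the step you yourself flag as ``the heart of the argument'' --- producing the clopen sets $V_n$ whose $G$-translates partition $X$ and whose mesh tends to zero --- is never carried out; you only state that you ``expect'' regular recurrence of every point plus compactness to supply it. That step is precisely the content of the statement (the paper itself does not reprove it; it imports it as Theorem 2 of [CP]), and everything you do prove (the itinerary map is continuous, equivariant, and a homeomorphism once injective) is the routine part. Moreover, your candidate atom $\{y : K_n\cdot y\subseteq U_n\}$ is a dead end as stated: it is an intersection of infinitely many clopen sets, so merely closed, and nothing in your outline upgrades it to a clopen set with pairwise disjoint translates.

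Here is concretely what is missing, and it can be filled along the lines you guessed at. First, show that if every point of $X$ is regularly recurrent, then for \emph{every} finite-index normal subgroup $K\leq G$ every point $y$ is $K$-almost periodic: given a clopen $U\ni y$, regular recurrence yields a finite-index $L\leq G$ with $L\cdot y\subseteq U$, and $K\cap L$ is a finite-index (hence syndetic) subgroup of $K$ contained in the $K$-return times of $y$ to $U$. Consequently $\overline{K\cdot y}$ is $K$-minimal for every $y$; since $K$ is normal, the $G$-translates of one $K$-minimal set are again $K$-minimal, their union is closed, $G$-invariant and hence all of $X$, so $X$ is a \emph{finite} disjoint union of closed $K$-minimal components --- and finitely many disjoint closed sets covering $X$ are automatically clopen. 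This is the upgrade from your closed invariant candidate to genuine clopen atoms with disjoint translates: the atom containing $x_0$ is $M=\overline{K\cdot x_0}$, its stabilizer $G_0=\{g: gM=M\}$ is a finite-index subgroup (not necessarily normal, which the paper's definition of odometer permits), and $G$ permutes the atoms as it permutes $G/G_0$. Second, for injectivity you need a compactness argument you only allude to: for each $\varepsilon>0$ and each $y\in X$ pick, by regular recurrence of $y$, a finite-index normal $L_y$ with $\overline{L_y\cdot y}$ inside a clopen set of diameter less than $\varepsilon$; these components are clopen by the first step, so finitely many of them, say for $y_1,\dots,y_m$, cover $X$; taking $K$ to be the normal core of $L_{y_1}\cap\dots\cap L_{y_m}$, every $K$-minimal component lies inside some $\overline{L_{y_i}\cdot y_i}$ and hence has diameter less than $\varepsilon$. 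Iterating this (intersecting with the previous subgroup at each stage) produces refining clopen partitions with mesh tending to zero, which is exactly what makes your map $\psi$ injective. Without these two lemmas your write-up is a plan whose critical steps are conjectured rather than proved.
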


 The following result shows that every minimal equicontinuous system on a Cantor set is conjugate to a $G$-odometer. We would like to mention that after the paper was submitted, the authors of \cite{DyerHurderLukina:2015} announced several results similar in spirit to the following theorem.

\begin{theorem}\label{TheoremEquicontinuousConjugateOdometers}
Let $(X,G)$ be a free equicontinuous minimal  system. Then  the group $G$ is residually finite and $(X,G)$ is conjugate to a $G$-odometer.
\end{theorem}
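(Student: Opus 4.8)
The plan is to reduce the two assertions to a single compact group that governs the system. For the odometer conclusion I would aim to verify the hypothesis of Proposition~\ref{PropositionRegularlyRecurrent}, namely that \emph{every point of $X$ is regularly recurrent}, and then simply quote that proposition; residual finiteness I would read off the same compact group. The organizing object is the enveloping (Ellis) group: let $\overline{G}$ be the closure of the family $\{x\mapsto g\cdot x : g\in G\}$ inside $C(X,X)$ with the uniform topology. Equicontinuity together with compactness of $X$ puts the Arzel\`a--Ascoli theorem at our disposal, so $\overline{G}$ is compact; by the standard theory of equicontinuous systems (\cite{Au}) it is in fact a compact topological group of homeomorphisms acting continuously on $X$ and containing $G$ as a dense subgroup. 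This is exactly the compact model of Example~\ref{ExampleModelEquicontinuous}.

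First I would record two soft properties of $\overline{G}$. Minimality forces the $\overline{G}$-action to be transitive: the orbit $\overline{G}\cdot x$ is compact, hence closed, and contains the dense set $G\cdot x$, so it equals $X$, giving $X\cong \overline{G}/\overline{G}_x$ for the closed stabilizer $\overline{G}_x$. The crucial structural point --- and the step I expect to be the main obstacle --- is that $\overline{G}$ is not merely compact but \emph{profinite}, and this is where the Cantor hypothesis must be used. The identity component $\overline{G}_0$ is a closed normal subgroup, and for every $y\in X$ the set $\overline{G}_0\cdot y$ is a connected subset of the totally disconnected space $X$, hence a single point; thus $\overline{G}_0$ fixes every point of $X$, and since $\overline{G}\subseteq\mathrm{Homeo}(X)$ acts faithfully we conclude $\overline{G}_0=\{e\}$. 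A compact totally disconnected group is profinite, so (using that $X$, and therefore $\overline{G}$, is metrizable) there is a decreasing sequence of open normal subgroups $N_1\supseteq N_2\supseteq\cdots$ with $\bigcap_n N_n=\{e\}$.

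Granting profiniteness, regular recurrence becomes immediate, so I would make it the next step. Fix $x$ and a clopen neighborhood $U$ of $x$. The set $V=\{\bar g\in\overline{G}: \bar g\cdot x\in U\}$ is open, by continuity of the orbit map, and contains the identity, hence contains some $N_n$. Since $G$ is dense in $\overline{G}$ and $N_n$ is open of finite index, $K:=G\cap N_n$ has finite index in $G$, and $K\cdot x\subseteq N_n\cdot x\subseteq U$, that is, $K\subseteq T_U(x)$. Therefore every point is regularly recurrent, and Proposition~\ref{PropositionRegularlyRecurrent} yields that $(X,G)$ is conjugate to a $G$-odometer $\lim_n(G/G_n,\pi_n)$.

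Finally I would deduce residual finiteness. Freeness implies the action is effective, so $G$ embeds into $\mathrm{Homeo}(X)$ and hence $G\hookrightarrow\overline{G}$. A profinite group is residually finite and every subgroup of a residually finite group is residually finite; concretely, given $g\in G\setminus\{e\}$ we have $g\notin N_n$ for some $n$, so $G\cap N_n$ is a finite-index normal subgroup of $G$ avoiding $g$. Alternatively, one may read residual finiteness directly off the odometer: freeness makes the stabilizer $\bigcap_n G_n$ of the base point trivial, and the finite-index normal cores $\mathrm{Core}_G(G_n)\subseteq G_n$ then intersect to $\{e\}$. Either way $G$ is residually finite, completing the proof. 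The only genuinely nontrivial ingredient is the total disconnectedness (hence profiniteness) of $\overline{G}$; everything else is a routine consequence of compactness, minimality, and density of $G$.
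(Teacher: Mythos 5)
Your proposal is correct and is essentially the paper's own proof: both pass to the compact group generated by the action (your Ellis group $\overline{G}$ is exactly the group $K$ of Example \ref{ExampleModelEquicontinuous}, which the paper realizes as a closed zero-dimensional subgroup of $X^X$), both use zero-dimensionality of that group to produce finite-index subgroups of $G$ witnessing regular recurrence of every point, and both conclude by quoting Proposition \ref{PropositionRegularlyRecurrent}. The only differences are technical rather than structural: the paper invokes Hewitt--Ross to obtain clopen (not necessarily normal) subgroups of $K$ and gets finite index in $G$ from minimality of the translation action, where you use open \emph{normal} subgroups from profiniteness together with density of $G$ (the same fact in different clothing, though your normality makes residual finiteness immediate), and the paper verifies regular recurrence upstairs on $(K,G)$ and then pushes it through the factor map $K\to K/H$, whereas you work directly on $X\cong\overline{G}/\overline{G}_x$.
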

\begin{proof}

 Every minimal equicontinuous system is conjugate to a system $(K/H,G)$ as described in Example \ref{ExampleModelEquicontinuous}. The freeness of $(X,G)$ implies that $\varphi$ is an embedding. Note that the group $G$ acts minimally on the group $K$ by translations \cite{Au}.

It follows from the arguments in \cite[Ch. 3, Thm 6]{Au} that  the group $K$ arises as a closed subset of $X^X$. Thus, $K$ is a zero-dimensional topological group. Let $U$ be a clopen neighborhood of $e$ in $K$. Theorem 7.7 in \cite{HR} implies that  there exists a clopen subgroup $L$ of $K$ contained in $U$. By  minimality of the action of $G$, there exists a finite set $F\subset G$ such that $K=\varphi(F)L$.

 Note that for any $g\in  G$ there exists $l\in L$ and $f\in F$ such that $\varphi(g) = \varphi(f)l$. Hence, $l\in L\cap \varphi(G)$. Setting   $\Lambda=L\cap \varphi(G)$, we see that  $$\varphi(G)=   \varphi(F)\Lambda.$$
It follows that  $$G = F\varphi^{-1}(\Lambda).$$
 Therefore, $G' = \varphi^{-1}(\Lambda)$ is a finite-index subgroup of $G$. Note that for every $g\in G'$, we have that $g\cdot e = g\in \Lambda\subset U$.  Let $\{U_n\}$ be a nested sequence of clopen neighborhoods of $e$ in $K$ such that $\bigcap_{n\geq 1}U_n = \{e\}$. For each $U_n$ we can construct a finite index subgroup $G_n$ such that $G_n\cdot e\subset U_n$. Notice that $\bigcap_{n\geq 1}G_n = \{e\}$. This shows that $G$ is a residually finite group.

 By construction,   the group $G'$ is contained in the set of return times of $e$ to $U$. Since the clopen set $U$ is arbitrary, $e$ is a regularly recurrent point. Therefore, every element of the group $K$ is regularly recurrent. Using the fact that $(K/H,G)$ is a factor of $(K,G)$,  we obtain that every point in $K/H$ is regularly recurrent. Applying Proposition \ref{PropositionRegularlyRecurrent}, we conclude that $(K/H,G)$ is topologically conjugate to a $G$-odometer.
\end{proof}

\begin{corollary}\label{CorollaryDisjointUnionOdometers} (1) Every equicontinuous dynamical system on a Cantor set is topologically conjugate to a disjoint union of odometers.

(2) Every equicontinuous dynamical system is measure-theoretically conjugate to an  odometer.
\end{corollary}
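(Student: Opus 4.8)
The plan is to reduce both parts to the minimal case settled in Theorem~\ref{TheoremEquicontinuousConjugateOdometers}, exploiting that an equicontinuous system is the disjoint union of its minimal components (see \cite{Au}). First I would write the given equicontinuous system $(X,G)$, with $X$ a Cantor set, as $X=\bigsqcup_i Y_i$, where each $Y_i$ is a minimal component. Each $(Y_i,G)$ is again minimal and equicontinuous, and $Y_i$, being a closed subset of the Cantor set $X$, is compact, metrizable and totally disconnected, hence either a finite single orbit (a trivial odometer) or itself a Cantor set.

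The crucial point for part (1) is that freeness is never actually used in the proof of Theorem~\ref{TheoremEquicontinuousConjugateOdometers} to reach the conclusion ``conjugate to an odometer.'' Inspecting that proof, freeness enters only to make $\varphi$ injective and thereby deduce residual finiteness of $G$; the construction of the finite-index subgroup $G'=\varphi^{-1}(\Lambda)\subseteq T_U(e)$ and the ensuing regular recurrence of every point go through verbatim for a (possibly non-effective) minimal equicontinuous system. Applying this argument to each component $(Y_i,G)$ shows that every point of $Y_i$ is regularly recurrent, so Proposition~\ref{PropositionRegularlyRecurrent} yields a conjugacy of $(Y_i,G)$ with a $G$-odometer. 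This gives (1), with the understanding that the ``disjoint union'' means the partition of $X$ into its minimal components, each conjugate to an odometer; one should note that the components need not be clopen (there may be infinitely many), so this is not literally a topological coproduct.

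For part (2) I would first fix a $G$-invariant ergodic probability measure $\mu$. Such a measure exists because the enveloping group of $(X,G)$ (the closure of the $G$-action inside $X^X$) is a compact group, and the push-forward of its Haar measure is $G$-invariant; the ergodic members of the resulting nonempty compact convex set of invariant measures are then at our disposal. The assignment of each point to its minimal component is a continuous $G$-factor onto a space on which $G$ acts trivially, so the partition $\{Y_i\}$ is measurable and $G$-invariant, and ergodicity forces $\mu$ to be concentrated on a single component $Y_{i_0}$. By part (1), $(Y_{i_0},G)$ is topologically, hence measurably, conjugate to an odometer, and since $\mu$ is carried by $Y_{i_0}$ the system $(X,G,\mu)$ is measurably isomorphic to that odometer equipped with its unique invariant measure. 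A non-ergodic invariant measure is then handled by applying this to each of its ergodic components.

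I expect the main difficulty to be bookkeeping rather than conceptual: verifying carefully that the regular-recurrence mechanism of Theorem~\ref{TheoremEquicontinuousConjugateOdometers} survives the passage to non-free, possibly non-effective, minimal components, and making precise the sense of ``disjoint union'' in part (1) when there are infinitely many non-open components. Once the concentration of ergodic measures on single components is in hand, the measure-theoretic reduction in part (2) is routine.
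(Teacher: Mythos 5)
Your proposal follows the same route as the paper's proof: decompose the system into its minimal components (via Auslander's results), apply Theorem~\ref{TheoremEquicontinuousConjugateOdometers} to each component for part (1), and for part (2) observe that an ergodic invariant measure must concentrate on a single minimal component. The paper's own proof is exactly this, stated in three lines. Where you go beyond it is the freeness point, and this is a genuine improvement: the corollary carries no freeness hypothesis, yet Theorem~\ref{TheoremEquicontinuousConjugateOdometers} is stated for free systems, and the paper applies it to the (possibly non-free) components without comment. Your observation --- that in the proof of that theorem freeness is used only to make $\varphi$ injective, hence only for the residual finiteness of $G$, while the construction of the finite-index subgroup $G'=\varphi^{-1}(\Lambda)$, the regular recurrence of every point of $K$ and of its factor $K/H$, and the appeal to Proposition~\ref{PropositionRegularlyRecurrent} go through verbatim without it --- is correct, and it is precisely what is needed to make the paper's one-line argument rigorous. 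Your remarks that finite components are trivial odometers and that the ``disjoint union'' in (1) is a partition into closed invariant sets rather than a topological coproduct (the components need not be clopen, and there may be uncountably many) are also sound; the paper glosses over both.

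One caveat on part (2): your closing sentence, that a non-ergodic invariant measure is ``handled by applying this to each of its ergodic components,'' cannot yield the literal conclusion. A $G$-odometer is uniquely ergodic (any invariant measure projects to the unique invariant, hence uniform, measure on each finite transitive level $G/G_n$), so a system equipped with a non-ergodic invariant measure is never measure-theoretically conjugate to an odometer; statement (2) must be read, as the paper implicitly reads it, for ergodic measures only. This is a defect in the statement's phrasing rather than in your argument, which for ergodic measures matches the paper's and supplies the missing details (existence of invariant measures via Haar measure on the enveloping group, and concentration via the factor onto the trivially-acted space of components).
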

\begin{proof}
(1) Using  Corollary 10 in Ch.1 and Theorem 2 in Ch.2 from \cite{Au}, we see that every equicontinuous system is a disjoint union of its minimal components. Theorem \ref{TheoremEquicontinuousConjugateOdometers} implies that every minimal component is conjugate to a $G$-odometer.

(2) It follows from (1) that that every ergodic measure must be supported by a minimal component, which is conjugate to a $G$-odometer.
\end{proof}

%
%

\section{Rigidity Theorems}\label{SectionRegidityResults}

In this section we establish that two minimal equicontinuous systems are continuously orbit equivalent if and only if they are ``almost virtually'' conjugate. This can be seen as an extension of  Boyle's flip conjugacy theorem \cite{Boyle:1983}, \cite{BoyleTomiyama:1998} to the case of free minimal equicontinuous systems.
In view of Theorem \ref{TheoremEquicontinuousConjugateOdometers}, we can assume that the systems of interest are free odometers and all groups are residually finite.

\begin{definition}\label{DefinitionStructurConj} Let $(X,G)$ and $(Y,H)$ be odometers. We say that $(X,G)$ and $(Y,H)$ are  {\it structurally conjugate} if  there exist decreasing sequences of finite index subgroups   $\{G_n\}_{n\geq 0}$ and $\{H_n\}_{n\geq 0}$ that determine $(X,G)$ and $(Y,H)$, respectively, and an isomorphism $\theta: H_0\rightarrow G_0$ such that  $\theta(H_n) = G_n$, $n\geq 0$, and $[G:G_0] = [H:H_0]<\infty$.
\end{definition}

The main result of this section is the proof of the fact that two odometers are continuously orbit equivalent if and only if they are structurally conjugate.

Consider a $G$-odometer $(X,G)$. Denote by $\mathbf{e}$ the element $\{e_n\}_{n\geq 0}\in X$, where $e_n$ is the coset in $G/G_n$ corresponding to the group $G_n$. Set $C_n = [e]_n$. Note that $C_n = \overline{G_n\cdot \mathbf e}$. Furthermore, the group $G_n$ is precisely the set of {\it return times} to $C_n$. In other words, $G_n = \{g\in G : g(x) \in C_n\}$ for any $x\in C_n$. Note also that the induced system $(C_n,G_n)$ is a $G_n$-odometer determined by the sequence of subgroups $\{G_{i}\}_{i\geq n+1}$.

\begin{lemma}\label{LemmaConjugacyInducedOdometers} Let $(X,G)$ be a dynamical system. Suppose that there exist a finite-index subgroup $G_0\subset G$, a system $\{f_0,\ldots,f_{n-1}\}$ of representatives for $G/G_0$, and a clopen set $C\subset X$  such that

(i) $G_0=\{g\in G : g(x)\in C\}$ for every $x\in C$;

(ii) the system $(C,G_0)$ is a $G_0$-odometer;

(iii) the family $\{f_0\cdot C,\ldots,f_{n-1}\cdot C\}$ is a clopen partition of $X$.

\noindent
Then $(X,G)$ is conjugate to a $G$-odometer.
\end{lemma}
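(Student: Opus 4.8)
The plan is to show that $(X,G)$ has every point regularly recurrent and then invoke Proposition~\ref{PropositionRegularlyRecurrent}. By Theorem~2 in \cite{CP}, verifying regular recurrence at every point suffices to conclude that the minimal system $(X,G)$ is conjugate to a $G$-odometer, so the real content is reducing the regular-recurrence condition on all of $X$ to the structure already assumed on the piece $C$. Note that hypothesis (ii) gives us, as part of being a $G_0$-odometer, a nested sequence of finite-index subgroups $\{G_0^{(k)}\}_{k\geq 0}$ of $G_0$ determining $(C,G_0)$, with $G_0^{(0)}=G_0$; equivalently, every point of $C$ is regularly recurrent \emph{with respect to the $G_0$-action}.

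First I would verify minimality of $(X,G)$, which is needed to apply Proposition~\ref{PropositionRegularlyRecurrent}. Since $(C,G_0)$ is a $G_0$-odometer it is minimal, so the $G_0$-orbit of any point of $C$ is dense in $C$; combined with (iii), which says the $G$-translates $\{f_i\cdot C\}$ cover $X$, the $G$-orbit of any point of $C$ meets and is dense in each $f_i\cdot C$, hence dense in $X$. Because the $f_i$ exhaust the cosets of $G_0$, every point of $X$ lies in some $f_i\cdot C$ and its orbit coincides with that of a point in $C$, giving minimality.

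Next I would prove regular recurrence at an arbitrary point $x\in X$. The key observation is that (i) identifies $G_0$ as the exact set of return times to $C$: for $x\in C$ and $g\in G$, $g\cdot x\in C$ if and only if $g\in G_0$. I would first handle $x\in C$. Given a clopen neighborhood $U$ of $x$, since $C$ is clopen I may assume $U\subset C$. By regular recurrence of $x$ in the $G_0$-odometer $(C,G_0)$, there is a finite-index subgroup $K\subset G_0$ with $K\subset T_U^{G_0}(x)$, meaning $k\cdot x\in U$ for all $k\in K$. But $T_U^{G_0}(x)\subset T_U^{G}(x)$ since $U\subset C$ and returns to $U\subset C$ force the return element into $G_0$ by (i). As $[G:K]=[G:G_0]\,[G_0:K]<\infty$, the subgroup $K$ has finite index in $G$ and sits inside the $G$-return times of $x$ to $U$, establishing regular recurrence for points of $C$. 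For a general $x\in X$, by (iii) write $x\in f_i\cdot C$, so $x = f_i\cdot y$ with $y\in C$. Given a clopen neighborhood $U$ of $x$, the set $f_i^{-1}\cdot U$ is a clopen neighborhood of $y$, and I would use the finite-index return subgroup $K$ for $y$ to produce, via conjugation by $f_i$, the finite-index subgroup $f_i K f_i^{-1}$ inside the return times of $x$ to $U$.

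The step I expect to be the main obstacle is the translation from $C$ to a general fiber $f_i\cdot C$: one must check carefully that conjugating the return-time subgroup by $f_i$ genuinely lands inside $T_U^G(x)$, i.e.\ that $(f_i k f_i^{-1})\cdot x = f_i\cdot(k\cdot y)\in f_i\cdot(f_i^{-1}\cdot U)=U$ for $k\in K$, and that $f_i K f_i^{-1}$ remains of finite index in $G$ (which is automatic since conjugation is an automorphism). The bookkeeping is routine but needs the precise return-time characterization in (i) to ensure no returns to $U$ are missed; once regular recurrence holds at every point, Proposition~\ref{PropositionRegularlyRecurrent} finishes the proof.
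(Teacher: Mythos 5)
Your proof is correct, but it follows a genuinely different route from the paper's. The paper argues constructively: it takes the nested sequence $\{G_n\}_{n\geq 1}$ of finite-index subgroups of $G_0$ defining $(C,G_0)$, forms the $G$-odometer $(Y,G)$ associated with $\{G_n\}_{n\geq 0}$, and extends the given conjugacy $\varphi\colon C\to [e]_0$ to all of $X$ by $\varphi(f_i\cdot x)=f_i\cdot\varphi(x)$, checking $G$-equivariance through the coset computation $gf_i=f_jh$ with $h\in G_0$ (this is where hypothesis (i) enters, to guarantee $h\cdot x\in C$). You instead verify the hypotheses of Proposition \ref{PropositionRegularlyRecurrent} --- minimality of $(X,G)$ and regular recurrence of every point, transported from $C$ to $f_i\cdot C$ by conjugating return-time subgroups --- and invoke it as a black box; these verifications are sound (though note the inclusion $T^{G_0}_U(x)\subset T^{G}_U(x)$ you need is trivial from $G_0\subset G$, so (i) plays almost no role in your argument, contrary to your remark). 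Two caveats on what each approach buys. First, you use the fact that every point of a $G_0$-odometer is regularly recurrent; this is the converse direction of Theorem 2 of \cite{CP}, which Proposition \ref{PropositionRegularlyRecurrent} as stated in the paper does not include --- it is an easy computation (return times to a cylinder set form a conjugate of a defining subgroup) and is part of the cited theorem, so this is a citation-level point rather than a gap. Second, and more substantively, your soft argument shows only that $(X,G)$ is conjugate to \emph{some} $G$-odometer, whereas the paper's construction identifies it as the odometer defined by the specific sequence $\{G_n\}_{n\geq 0}$. The lemma as stated asks only for the former, so your proof is complete; but the finer conclusion is what the paper actually exploits in the proof of Theorem \ref{TheoremMainRigidity}(1), where the odometer structure on $(X,G)$ must be the one given by $G_m=\theta(H_m)$ in order to conclude structural conjugacy, so substituting your proof would force an extra argument at that later step (for instance, rerunning the construction of \cite{CP} on cylinder sets inside $C$, where (i) forces all return times into $G_0$).
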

\begin{proof}
Fix a decreasing sequence of finite-index subgroups $\{G_n\}_{n\geq 1}$ of $G_0$ that determine the structure of the $G_0$-odometer $(C,G_0)$. Consider the $G$-odometer $(Y,G)$ corresponding to the sequence of subgroups $\{G_n\}_{n\geq 0}$.  We claim that the systems $(X,G)$  and $(Y,G)$ are conjugate.

Note that $(C,G_0)$ and $([e]_0,G_0)$, $[e]_0\subset Y$, are conjugate. Denote the homeomorphism implementing the conjugacy between the systems by $\varphi$. Extend it to a homeomorphism $\varphi : X \rightarrow Y$ as follows: for  $y\in X$ find unique $x\in C$ and $f_i$, $i=0,\ldots,n-1$, with $f_i\cdot x = y$ and set $\varphi(f_i\cdot x) = f_i\cdot \varphi(x)$.

  Fix  $g\in G$ and $y\in X$. Let $x\in C$  and $f_i$ be as above. Find $h\in G_0$ and $f_j$ such that $gf_i = f_j h$. Then $$\varphi(g\cdot y) = \varphi(gf_i\cdot x) = \varphi(f_j h\cdot x) = f_j \cdot \varphi(h\cdot x) = f_jh\cdot \varphi(x) = g f_i\cdot \varphi(x) = g\cdot \varphi(y).$$
This shows that $\varphi$ is $G$-equivariant, which implies the result.
\end{proof}

Suppose the dynamical systems $(X,G)$ and $(Y,H)$, with  groups acting freely, are  continuously orbit equivalent. Let $\varphi: Y \rightarrow X$ be a homeomorphism implementing the orbit equivalence. Define a function $f : H\times Y \rightarrow G$  by $$f(h,y)\cdot \varphi(y) = \varphi(h\cdot y)\mbox{ for every }h\in H\mbox{ and }y\in Y,$$ dubbed an {\it orbit cocycle}. Note that $f$ satisfies the {\it cocycle identity:} $$f(h_1h_2,y) = f(h_1,h_2\cdot y) f(h_2,y)\mbox{ for every }h\in H\mbox{ and }y\in Y.$$

By construction, the function $f: H\times Y\rightarrow G$ is continuous and for every $y\in Y$  $f(\cdot,y) : H\rightarrow G$ is a bijection. Note that the ``dual'' cocycle $q : G\times X\rightarrow H$ is also continuous.

\begin{theorem}[Rigidity Theorem]\label{TheoremMainRigidity} (1) Let $H$ be a finitely generated residually finite group and $(Y,H)$ be  an $H$-odometer. Suppose  a free dynamical system $(X,G)$ is continuously orbit equivalent to $(Y,H)$.  Then  $(X,G)$ is conjugate to a free odometer that is structurally conjugate to  $(Y,H)$.

(2) Conversely, let $(X,G)$ and $(Y,H)$ be odometers. Suppose $(X,G)$ and $(Y,H)$ are structurally conjugate. Then $(X,G)$ and $(Y,H)$ are continuously orbit equivalent.
\end{theorem}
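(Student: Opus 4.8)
\emph{Overall strategy and the converse (2).} The plan is to prove the two implications separately, using (2) as a warm-up that also reveals the data to be reconstructed in (1). For (2), I would use the isomorphism $\theta:H_0\to G_0$ and the relations $\theta(H_n)=G_n$ to see that the base odometer on $[e]_0\subset Y$ and the base odometer on $[e]_0\subset X$ are $\theta$-conjugate: $\theta$ induces compatible bijections $H_0/H_n\to G_0/G_n$, hence a conjugating homeomorphism $\varphi_0$ of the two inverse limits. Since $[G:G_0]=[H:H_0]=:m$, I would fix coset representatives $\{f_0,\dots,f_{m-1}\}$ for $G/G_0$ and $\{k_0,\dots,k_{m-1}\}$ for $H/H_0$ and extend $\varphi_0$ to a homeomorphism $\Psi:Y\to X$ by $\Psi(k_i\cdot y)=f_i\cdot\varphi_0(y)$ for $y\in[e]_0$. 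Writing $h k_i=k_j h'$ with $h'\in H_0$ and using $\theta$-equivariance of $\varphi_0$, a direct computation shows that on each clopen piece $k_i\cdot[e]_0$ the action of a fixed $h\in H$ is implemented by the fixed element $f_j\,\theta(h')\,f_i^{-1}\in G$; the symmetric computation with $\theta^{-1}$ controls the dual cocycle. This yields continuous orbit equivalence.

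\emph{The forward direction (1): minimality and finite generation.} For (1) I would first note that continuous orbit equivalence carries the lattice of closed invariant sets of $(Y,H)$ onto that of $(X,G)$, so minimality of the odometer $(Y,H)$ forces $(X,G)$ to be minimal. Next I would exploit that $H$ is finitely generated: fixing a symmetric finite generating set $S\subset H$, each $f(s,\cdot):Y\to G$ is continuous, hence locally constant, hence takes only finitely many values; let $\Phi$ be their union. The cocycle identity $f(h_1h_2,y)=f(h_1,h_2\cdot y)f(h_2,y)$ then gives $f(h,y)\in\langle\Phi\rangle$ for all $h,y$, while freeness of $(X,G)$ makes $f(\cdot,y):H\to G$ a bijection onto $G$; combining these two facts yields $G=\langle\Phi\rangle$, so $G$ is finitely generated and the dual cocycle $q:G\times X\to H$ is likewise locally constant on a finite generating set of $G$.

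\emph{The forward direction (1): odometer structure on $X$ (the crux).} Writing $D_n=[e]_n\subset Y$ and $C_n=\varphi(D_n)\subset X$, I would verify the hypotheses of Lemma~\ref{LemmaConjugacyInducedOdometers} for $C=C_{n_0}$ with a suitable $n_0$. Since $H_n$ is exactly the return-time set of any point of $D_n$ to $D_n$, one computes $T_{C_n}(x)=\{g\in G:q(g,x)\in H_n\}$ for $x\in C_n$. The heart of the matter is to choose $n_0$, using the uniform continuity of $q$ on the finite generating set of $G$ together with minimality, so finely that this return-time set is a single base-point-independent finite-index subgroup $G_0\subseteq G$ and that $(C_{n_0},G_0)$ is itself a $G_0$-odometer (it is minimal and orbit equivalent, via $\varphi$, to the $H_{n_0}$-odometer $(D_{n_0},H_{n_0})$); condition (iii) of the Lemma is then immediate, so $(X,G)$ is conjugate to a $G$-odometer, with freeness inherited from $(X,G)$. \textbf{I expect this step to be the main obstacle.} The difficulty is precisely that $q(\cdot,x)$ is a cocycle and not a homomorphism, so images of subgroups need not be subgroups and, in expanding $q(g_1g_2,x)$, the intermediate point $g_2\cdot x$ need not preserve the relevant $q$-values unless $n_0$ forces the cocycle to respect the level-$n_0$ cylinders; making this alignment uniform over the infinitely many elements of $G_0$ (equivalently, proving the subgroup property and the induced-odometer property) is the technical core.

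\emph{The forward direction (1): structural conjugacy.} Once $(X,G)$ is realized as the $G$-odometer determined by $G_n=T_{C_n}(x)$, $n\ge n_0$, I would read off the required isomorphism from $q$. Fixing a base point $x\in C_{n_0}$, constancy of $q(g',\cdot)$ on $C_{n_0}$ for $g'\in G_0$ collapses the cocycle identity to multiplicativity, so $q(\cdot,x)|_{G_0}:G_0\to H_{n_0}$ is a homomorphism; it is injective as a restriction of the injective map $q(\cdot,x)$, and surjective onto $H_{n_0}$ because $q(\cdot,x)$ carries $G_0=\{g:q(g,x)\in H_{n_0}\}$ bijectively onto $H_{n_0}$. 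Its inverse $\theta:H_{n_0}\to G_0$ is then an isomorphism with $\theta(H_n)=G_n$ for all $n\ge n_0$, since $G_n=\{g\in G_0:q(g,x)\in H_n\}$. The index equality $[G:G_0]=[H:H_{n_0}]$ I would obtain from unique ergodicity: orbit equivalence preserves invariant measures, so $\varphi$ pushes the unique invariant measure of $(Y,H)$ onto that of $(X,G)$, and as a level-$n_0$ cylinder of an odometer has measure equal to the reciprocal of its index, $C_{n_0}=\varphi(D_{n_0})$ gives $\mu_X(C_{n_0})=\mu_Y(D_{n_0})$ and hence the claim. Reindexing the tails $\{G_{n_0+k}\}_{k\ge0}$ and $\{H_{n_0+k}\}_{k\ge0}$, which still determine $(X,G)$ and $(Y,H)$, produces exactly the data of Definition~\ref{DefinitionStructurConj}, so $(X,G)$ is conjugate to a free odometer structurally conjugate to $(Y,H)$.
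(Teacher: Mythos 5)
Your part (2) is essentially the paper's own argument -- conjugate the base odometers via $\theta$, extend over a bijection of coset representatives, and observe that the resulting cocycle is constant on clopen pieces -- so there is nothing to discuss there. In part (1), however, there is a genuine gap, and it sits exactly where you flagged it: the claim that one can choose $n_0$ so that the return-time set $T_{C_{n_0}}(x)$ is a base-point-independent finite-index subgroup $G_0$ of $G$ and that $(C_{n_0},G_0)$ is a $G_0$-odometer is not a technical verification left to the reader; it is the entire content of the theorem. Announcing it as ``the main obstacle'' and describing why it is hard does not discharge it. Note that point-independence of the return times on $C_{n_0}$ is literally equivalent to the statement that $f(h,x)=f(h,y)$ for all $h\in H_{n_0}$ and all $x,y$ in the same cylinder, i.e.\ to the uniform local constancy of the orbit cocycle -- which is the paper's key lemma.

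Moreover, the tool you propose for this step points in the wrong direction. You want uniformity of the dual cocycle $q$ over a finite generating set of $G$; but to propagate local constancy of the generator cocycles to local constancy of $q(g,\cdot)$ for an arbitrary long word $g$, the intermediate points of the cocycle expansion must stay aligned with the chosen partition, and that alignment for \emph{all} group elements simultaneously is exactly what equicontinuity of the corresponding action provides. The $G$-action on $X$ is not known to be equicontinuous at this stage (that is what is being proved); only the $H$-action is, because $(Y,H)$ is an odometer. Accordingly, the paper works with the cocycle $f:H\times X\to G$ rather than with $q$: it fixes a finite symmetric generating set of $H$, takes a finite clopen partition $\{O_j\}$ on whose atoms each generator cocycle is constant, uses equicontinuity of the $H$-action to produce a refinement $\{U_i\}$ such that any two points of one $U_i$ stay within the Lebesgue number of $\{O_j\}$ under every $h\in H$, and then expands $f(h,\cdot)$ along a word in the generators to conclude that $f(h,\cdot)$ is constant on each $U_i$ for all $h\in H$ at once. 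Refining $\{U_i\}$ by the cosets $H/H_n$ and restricting to $C_0=[e]_n$ makes $h\mapsto f(h,x)$ an injective homomorphism on $H'=H_n$ whose image $G'$ is the return-time subgroup of $C_0$; the decomposition $G=f(H,x)=\bigsqcup_j f(h_j,x)G'$ then yields both $[G:G']=[H:H']<\infty$ and the clopen partition needed to invoke Lemma \ref{LemmaConjugacyInducedOdometers}. Your unique-ergodicity argument for the index equality is a workable alternative to this coset decomposition, but it lies downstream of the crux step and so cannot repair the gap.
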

\begin{proof} (1)  Assume that  $(X,G)$ and $(Y,H)$ are continuously orbit equivalent. By conjugating the system $(Y,H)$, we can assume that both groups $G$ and $H$ act on the same space $X$ and share the same orbits.
Denote by $f:H\times X\rightarrow  G$ the  orbit cocycle defined by $f(h,x)\cdot x = h\cdot x$.  Notice that for  given $h\in H$, $f(h,\cdot): X\rightarrow G$ is a continuous function.

Fix a symmetric set of generators $\{s_1,\cdots,s_r\}$ for $H$. Find a clopen partition $O_1\sqcup O_2\sqcup\ldots \sqcup O_p=X$ such that the cocycle $f(s_i,\cdot)|O_j=const$ for every $i$ and $j$. Let $\delta> 0$
 be Lebesgue's number of the partition $\{O_1,\ldots,O_p\}$.  Since the system $(X,H)$ is equicontinuous, we can find a clopen refinement  $U_1\sqcup\ldots \sqcup U_k = X$ of the partition $\{O_1,\ldots, O_p\}$ such that if $x,y\in U_i$ for some $i$, then $d(h\cdot x, h\cdot y) < \delta$ for every $h\in H$. Here $d$ is a metric  compatible with the topology. Therefore, if $x,y\in U_i$ and $h\cdot x \in O_j$, $h\in H$, then $h\cdot y\in O_j$.
 It follows that if $x,y\in U_i$, then for any $h\in H$ and $s_j$, we have that $f(s_j,h\cdot x) = f(s_j,h\cdot y)$.

Consider an arbitrary element $h=s_{i_1}\cdots s_{i_m}\in H$. If $x,y\in U_i$, then $$f(h,x) = \prod_{l=1}^m f(s_{i_l},s_{i_{l+1}}s_{i_{l+2}}\cdots s_{i_m}\cdot x) =
 \prod_{l=1}^m f(s_{i_l},s_{i_{l+1}}s_{i_{l+2}}\cdots s_{i_m} \cdot y) = f(h,y).$$

Let $\{H_n\}_{n\geq 0}$ be a sequence of  subgroups that determine the structure of the odometer $(Y,H)$. Choose $n>0$ such that the partition $\{C_0,\ldots,C_{q-1}\}$ into cosets $H/H_n$  refines $\{U_1,\cdots,U_k\}$. We will assume that the set $C_0$ corresponds to $[e]_n$.  Set  $H' = H_n$. Note that the set  $C_0$ is $H'$-invariant. Note also that $f(h,x)=f(h,y)$ for every $h\in H$ and $x,y\in C_0$.

 Fix  $x\in C_0$. Set $\theta(h) = f(h,x)$. The definition of $\theta$ is independent of $x$. If $h_1,h_2\in H'$, then
$$\theta(h_1h_2) = f(h_1h_2,x)= f(h_1,h_2\cdot x)f(h_2,x) = f(h_1, x)f(h_2,x) = \theta(h_1)\theta(h_2).$$

Set $G' = \theta(H')$. Note that if $\theta(h) = e$, then, in view of freeness of the action, $h=e$.  It follows that  $\theta : H' \rightarrow G'$ is an isomorphism and that $(C_0,G')$ is a $G'$-odometer.
Fix representatives $\{h_0,\ldots,h_{q-1}\}$ for cosets in $H/H'$. Then
$$\begin{array}{lll} G & = &f(H,x) \\
 & = & f\left(\bigsqcup_{j=0}^{q-1} h_j\cdot H',x\right) \\
 & = & \bigsqcup_{j=0}^{q-1}  f\left( h_j,H'\cdot x\right)f(H',x) \\
 & = & \bigsqcup_{j=0}^{q-1}f\left( h_j,x\right)G'.\end{array}$$ This implies that $[H:H'] = [G:G'] <\infty$. Note that $h_i\cdot C_0 = f(h_i,x) \cdot C_0$. Hence, $\{f(h_0,x)\cdot C_0,\ldots, f(h_{q-1},x)\cdot C_0\}$ is a clopen partition of $X$ and $G'$ is the set of return times to $C_0$.  Applying Lemma \ref{LemmaConjugacyInducedOdometers} we obtain that $(X,G)$ is a $G$-odometer. Setting $G_m = \theta(H_m)$, $m\geq n$, we see that $(X,G)$ and $(Y,H)$ are structurally conjugate.

(2)  Conversely, suppose that $(X,G)$ and $(Y,H)$ are structurally conjugate. Let $\{G_n\}_{n\geq 0}$, $\{H_n\}_{n\geq 0}$ and $\theta : H_0 \rightarrow G_0$ be as Definition \ref{DefinitionStructurConj}.  Let $\{C_0,\ldots,C_{n-1}\}$ be the collection of clopen sets corresponding to the cosets  $G/G_0$, with  $C_0$ corresponding to $G_0$. Note that  $C_0$ is $G_0$-invariant. Similarly, let $\{D_0,\ldots, D_{n-1}\}$ be the collection of clopen sets corresponding to the cosets $H/H_0$ with $D_0$ corresponding to $H_0$.

 Note that $(D_0,H_0)$  and $(C_0,G_0)$ are $\theta$-conjugate. Denote by $\varphi: D_0\rightarrow C_0$ the homeomorphism implementing the conjugacy, i.e., $$\varphi(h\cdot y) = \theta(h)\cdot \varphi(y)\mbox{ for all }h\in H_0,\; y\in D_0.$$

We observe that elements of $G$ and $H$ permute the clopen sets $\{C_i\}_{i=0}^{n-1}$  and $\{D_i\}_{i = 0}^{n-1}$, respectively.  Fix two families of coset representatives $F_G$ and $F_H$ for $G/G_0$ and $H/H_0$, respectively. Assume that the identities of the respective groups belong to $F_G$ and $\in F_H$. Fix a bijection $F_H\ni f\mapsto a_f\in F_G$ with $e \mapsto e$.

Extend the homeomorphism $\varphi$ to $\varphi: Y\rightarrow X$ by setting $$\varphi|_{f\cdot D_0}(y) = a_f \cdot  \varphi(f^{-1}\cdot y) $$ for every $f\in F_H$ and $y\in f\cdot D_0$.  We claim that $\varphi$ implements a continuous orbit equivalence between the systems.

Let $y\in D_0$ and $h\in H$. Write $h = fh_0$, where $f\in F_H$ and $h_0\in H_0$. Then
\begin{equation}\label{EqClopenNbhdOE}\varphi(h\cdot y ) = \varphi(fh_0\cdot y) = a_f\cdot \varphi(h_0\cdot y) = a_f \theta(h_0)\cdot \varphi(y).\end{equation}
 Thus, $\varphi(y)$ and $\varphi(h\cdot y)$ lie in the same $G$-orbit. Therefore, $\varphi (H\cdot y) \subset G\cdot \varphi(y)$ for every $y\in D_0$. Since the set  $D_0$ meets every $H$-orbit,  the inclusion extends to any $y\in Y$. Using the same argument as above one can also show that $\varphi (H\cdot y) \supset G\cdot \varphi(y)$ for every $y\in Y$.
 Thus, $\varphi$ implements an orbit equivalence between the systems $(X,G)$ and $(Y,H)$. Since Equation (\ref{EqClopenNbhdOE}) holds on a clopen neighborhood of $y$, we conclude that $\varphi$ implements a continuous orbit equivalence.
\end{proof}

In the following result we show that for abelian groups continuous orbit equivalence implies virtual piecewise conjugacy.

\begin{theorem}\label{TheoremRigidityZnOdometers}  Let $(X,G)$ and $(Y,H)$ be  free odometers, where  $G = H =\mathbb Z^d$.  Let $\varphi : Y\rightarrow X$ be a map implementing a continuous orbit equivalence between  $(X,G)$ and $(Y,H)$ and let $f: H\times Y \rightarrow G$ be the corresponding orbit cocycle.  Then there exist finite index normal subgroups $G_0\subset G$ and $H_0\subset H$ with  $[G:G_0] = [H:H_0]$,  a clopen partition $\{C_0,\ldots,C_{q-1}\}$ of $Y$ into $H_0$-invariant sets, and isomorphisms $\theta_i : H_0\rightarrow G_0$, $i = 0,\ldots, q-1$, such that  $f(h,y) = \theta_i(h)$ for every $h\in H_0$ and $y\in C_i$, $i=0,\ldots,q-1$.
\end{theorem}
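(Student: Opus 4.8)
The plan is to reprise the construction from the proof of Theorem \ref{TheoremMainRigidity}(1) to first put the cocycle into a uniformly locally constant form, and then to exploit the commutativity of $\mathbb{Z}^d$ to synchronize the resulting local homomorphisms across the entire space, producing a single pair $G_0, H_0$ valid on every clopen piece.

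First I would use equicontinuity of the odometer $(Y,H)$ together with the finite generation of $H = \mathbb{Z}^d$ to show that $f$ is constant in its space variable on the pieces of a suitable clopen partition. Fix a finite symmetric generating set $\{s_1,\ldots,s_r\}$ of $H$ and a clopen partition $\{O_1,\ldots,O_p\}$ of $Y$ on which each $f(s_i,\cdot)$ is constant. Using the Lebesgue number of $\{O_j\}$ and equicontinuity, refine to a clopen partition $\{U_1,\ldots,U_k\}$ so that points in a common $U_l$ stay uniformly close under the whole $H$-action; exactly as in Theorem \ref{TheoremMainRigidity}(1), the cocycle identity then propagates constancy from the generators to all of $H$, so that $f(h,\cdot)$ is constant on each $U_l$ for every $h\in H$. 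I would then choose a level $n$ so that the clopen coset partition $\{C_0,\ldots,C_{q-1}\}$ associated to $H/H_n$ refines $\{U_l\}$, and set $H_0=H_n$. Because $H$ is abelian, each $C_i=[a_i]_n$ is $H_0$-invariant and $f(h,\cdot)$ is constant on it; defining $\theta_i(h)$ to be this constant value yields a homomorphism $\theta_i:H_0\to G$ (via the cocycle identity and the $H_0$-invariance of $C_i$), which is injective by freeness of $(Y,H)$ and hence an isomorphism onto its image $G_i:=\theta_i(H_0)$, a finite-index subgroup of $G=\mathbb{Z}^d$. Normality of $G_0,H_0$ is then automatic since $\mathbb{Z}^d$ is abelian.

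The genuinely new step, and the place where commutativity is essential, is to show that the images $G_i$ coincide, so that a single $G_0$ serves all pieces. Given $i$ and $j$, pick $y\in C_i$ and, by minimality of the odometer, an element $a\in H$ with $a\cdot y\in C_j$. Writing the cocycle identity for both $ah$ and $ha$ and using $ah=ha$, I obtain $f(a,h\cdot y)\,f(h,y)=f(h,a\cdot y)\,f(a,y)$. Since $h\cdot y\in C_i$ and $f(a,\cdot)$ is constant on $C_i$, this collapses to $b\,\theta_i(h)=\theta_j(h)\,b$ with $b=f(a,y)$, and commutativity of $G$ forces $\theta_i(h)=\theta_j(h)$. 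Thus all the $\theta_i$ in fact coincide, and in particular $G_i=G_0:=\theta_0(H_0)$ for every $i$. Finally, fixing $y_0\in C_0$ and using that $f(\cdot,y_0):H\to G$ is a bijection, the decomposition $G=\bigsqcup_j f(h_j,y_0)\,G_0$ over coset representatives $\{h_j\}$ of $H/H_0$ gives $[G:G_0]=[H:H_0]=q$, which completes the proof.

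I expect the main obstacle to be precisely this synchronization step: Theorem \ref{TheoremMainRigidity}(1) only produces the isomorphism on the distinguished piece $C_0$, whereas here I must show that the a priori piece-dependent homomorphisms all land on the same $G_0$. The identity $ah=ha$ is exactly what converts these into a single homomorphism, so the hypothesis $G=H=\mathbb{Z}^d$ is used in an essential way; in a non-abelian setting one would only expect the $\theta_i$ to be related by conjugation within $G$, and the clean common image $G_0$ would generally fail.
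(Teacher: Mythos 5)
Your proposal is correct and follows essentially the same route as the paper: both reprise the construction of Theorem \ref{TheoremMainRigidity}(1) to make the cocycle constant on a coset partition refining the equicontinuity partition, and both synchronize the piecewise homomorphisms via the cocycle identity combined with commutativity (your identity $f(a,h\cdot y)f(h,y)=f(h,a\cdot y)f(a,y)$ is exactly the paper's conjugation computation $f(qhq^{-1},x)=f(q^{-1},x)^{-1}f(h,q^{-1}\cdot x)f(q^{-1},x)$ specialized to the abelian case). The only cosmetic difference is that you conclude the stronger fact that the $\theta_i$ coincide as maps, whereas the paper only records the equality of their images $G_i'$, which is all the statement requires.
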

\begin{proof}  Without loss of generality, we can assume that both groups act on $X$. Let $H'$ and $\{C_0,\ldots, C_{q-1}\}$ be as in the proof of Statement (1) in  Theorem \ref{TheoremMainRigidity}.
Since $H'$ is a normal subgroup, every set $C_i$ is $H'$-invariant. Note also that $f(h,x)=f(h,y)$ for every $h\in H$ and $x,y\in C_i$.

 Fix $i=0,\ldots,q-1$  and $x\in C_i$. Set $\theta_i(h) = f(h,x)$. Set $G_i' = \theta_i(H')$. As in the proof of Theorem \ref{TheoremMainRigidity}, we obtain that the definition of $\theta_i$ is independent of $x\in C_i$ and $\theta_i : H'\rightarrow G_i'$ is an isomorphism of the groups. Furthermore,  $[G:G_i'] = [H:H'] <\infty$ for every $i=0,\ldots,q-1$.

We claim that  $G_i=G_j$ for all $i$ and $j$.  Observe that, by construction of exact odometers, the subgroup $H'$ possesses the property that $H'=\{h\in H : h(x) \in C_i\}$ for every $i$ and $x\in C_i$.  Let $h\in H'$ and $q\in H$. Then
  \begin{equation*}\begin{array}{lll}
 f(qhq^{-1},x) & = & f(q,hq^{-1}\cdot x)f(h,q^{-1}\cdot x) f(q^{-1},x) \\
 & = & f(q,q^{-1}\cdot x)f(h,q^{-1}\cdot x) f(q^{-1},x) \\
 & = & f(q^{-1},x)^{-1} f(h,q^{-1}\cdot x) f(q^{-1},x).
 \end{array}\end{equation*}

Therefore, if $x\in C_i$ and $q^{-1}\cdot x\in C_j$, $q\in H$, then $$G_i' = f(H',x) = f(q H' q^{-1},x) = f(q^{-1},x)^{-1}f(H',q^{-1}\cdot x)f(q^{-1},x) = G_j'.$$

  Set $G' = G_i'$. Thus, for every $i=0,\ldots,q-1$, we have that $\theta_i : H'\rightarrow G'$ is a group isomorphism  and $f(h,x) = \theta_i(h)$, for every $x\in C_i$ and $h\in H'$. In particular, this implies that  the systems $(C_i,H')$ and $(C_i,G')$ are topologically $\theta_i$-conjugate. This completes the proof.
\end{proof}

 The following result is an immediate consequence of Theorem \ref{TheoremMainRigidity} and  \cite[Theorem 2.3]{BoyleTomiyama:1998}. Recall that two $\mathbb Z$-actions $(X,T)$ and $(Y,T)$ are called {\it flip conjugate} if $(X,T)$ is conjugate to $(Y,S)$ or to $(Y,S^{-1})$.

\begin{corollary}\label{CorollaryVirtualFlipEquivalence} Let $(X,T)$ and $(Y,S)$ are $\mathbb Z$-odometers. Then $(X,T)$ and $(Y,S)$ are flip conjugate if and only if they are structurally conjugate.
\end{corollary}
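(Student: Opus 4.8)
The plan is to deduce the statement by composing two equivalences that have already been established: Theorem~\ref{TheoremMainRigidity}, which for odometers identifies continuous orbit equivalence with structural conjugacy, and Boyle--Tomiyama's \cite[Theorem 2.3]{BoyleTomiyama:1998}, which for minimal $\mathbb{Z}$-systems identifies continuous orbit equivalence with flip conjugacy. The key preliminary observation is that every $\mathbb{Z}$-odometer is a free minimal equicontinuous $\mathbb{Z}$-system, and in particular a minimal $\mathbb{Z}$-system; hence both $(X,T)$ and $(Y,S)$ simultaneously satisfy the hypotheses of Theorem~\ref{TheoremMainRigidity} (with $G=H=\mathbb{Z}$, which is finitely generated and residually finite) and those of the Boyle--Tomiyama theorem. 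Continuous orbit equivalence thus becomes the common hinge linking flip conjugacy on one side to structural conjugacy on the other.

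I would then argue the two implications separately, invoking Boyle--Tomiyama only where it does genuine work. For \emph{flip conjugate $\Rightarrow$ structurally conjugate}: if $(X,T)$ is conjugate to $(Y,S)$ the conjugacy is itself a continuous orbit equivalence, while if $(X,T)$ is conjugate to $(Y,S^{-1})$ one observes that the $\mathbb{Z}$-orbits generated by $S$ and by $S^{-1}$ coincide pointwise, so the same homeomorphism is a continuous orbit equivalence between $(X,T)$ and $(Y,S)$ (for each $n$, conjugation by $\psi$ sends $T^n$ to the single group element $S^{-n}$, hence the matching holds on all of the space). In either case the systems are continuously orbit equivalent, and since $\mathbb{Z}$ is finitely generated residually finite and $(X,T)$ is free, Theorem~\ref{TheoremMainRigidity}(1) yields that $(X,T)$ is structurally conjugate to $(Y,S)$. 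For the converse, \emph{structurally conjugate $\Rightarrow$ flip conjugate}: Theorem~\ref{TheoremMainRigidity}(2) gives that structural conjugacy implies continuous orbit equivalence, and then, because both systems are minimal $\mathbb{Z}$-systems, \cite[Theorem 2.3]{BoyleTomiyama:1998} upgrades this to flip conjugacy.

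There is no substantive obstacle here; the corollary is a direct composition of two cited equivalences, so the only care required is bookkeeping. Specifically, I would verify that the notion of continuous orbit equivalence used in this paper matches the one appearing in Boyle--Tomiyama, and that the output of Theorem~\ref{TheoremMainRigidity}(1) refers to $(X,T)$ itself rather than merely to a conjugate copy. The latter is automatic in the present setting, since $(X,T)$ is already a $\mathbb{Z}$-odometer and the proof of Theorem~\ref{TheoremMainRigidity}(1) exhibits $(X,T)$ as a $G$-odometer structurally conjugate to $(Y,S)$ directly, with no passage to an auxiliary conjugate system needed.
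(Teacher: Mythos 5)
Your proposal is correct and takes essentially the same route as the paper: the paper presents this corollary as an immediate consequence of Theorem~\ref{TheoremMainRigidity} together with \cite[Theorem 2.3]{BoyleTomiyama:1998}, using continuous orbit equivalence as the hinge between flip conjugacy and structural conjugacy, exactly as you do. Your extra bookkeeping (handling the $(Y,S^{-1})$ case and noting that the output of Theorem~\ref{TheoremMainRigidity}(1) applies to $(X,T)$ itself) just makes explicit the details the paper leaves to the reader.
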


 As an alternative proof of Corollary \ref{CorollaryVirtualFlipEquivalence}, one could also use the Gottschalk-Hedlung theorem and ideas from the proof of Theorems \ref{TheoremMainRigidity} and \ref{TheoremRigidityZnOdometers} to show that for $\mathbb Z$-odometers continuous orbit cocycles  are cohomologous to automorphisms of $\mathbb Z$, which would imply the result.

Given a dynamical system $(X,G)$, denote by $C_r^*(X,G)$ the reduced crossed product $C^*$-algebra arising from a dynamical system $(X,G)$, see, for example, \cite{Renault:2008}. The reduced crossed  product $C^*$-algebras are isomorphic via a map preserving $C(X)$ if and only if the associated dynamical systems are continuously orbit equivalent \cite[Theorem 5.1]{Matui:2012}, \cite[Proposition 4.13]{Renault:2008}. Applying this fact along with Theorem \ref{TheoremMainRigidity}, we obtain the following result classifying the restricted isomorphism class of generalized Bunce-Deddens algebras \cite{Orfanos:2010}.

\begin{corollary}\label{CorollaryBunceDeddensAlgebras} Let $(X,G)$ and $(Y,H)$ be  free odometers with $G$ and $H$ finitely generated residually finite groups. The following are equivalent:

(1) $(X,G)$ and $(Y,H)$ are structurally conjugate.

(2) There exists an isomorphism $\pi: C^*_r(X,G) \rightarrow C_r^*(Y,H)$ such that $\pi(C(X)) = C(Y)$.
\end{corollary}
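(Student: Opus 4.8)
The plan is to prove the two implications by concatenating two equivalences that are each already at our disposal: the Rigidity Theorem (Theorem \ref{TheoremMainRigidity}), which relates structural conjugacy to continuous orbit equivalence, and the Renault--Matui correspondence (\cite[Proposition 4.13]{Renault:2008}, \cite[Theorem 5.1]{Matui:2012}), which relates continuous orbit equivalence to a $C(X)$-preserving isomorphism of the reduced crossed products. Before invoking these I would note that the hypotheses are met: free odometers are topologically free (indeed free) and minimal, so the Renault--Matui theorem applies, and $G$ and $H$ are finitely generated residually finite by assumption, so Theorem \ref{TheoremMainRigidity}(1) is available.

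For the implication $(1)\Rightarrow(2)$, assuming $(X,G)$ and $(Y,H)$ are structurally conjugate, I would first apply Theorem \ref{TheoremMainRigidity}(2) to obtain that the two systems are continuously orbit equivalent. Since both systems are topologically free, the Renault--Matui correspondence then produces an isomorphism $\pi\colon C^*_r(X,G)\to C^*_r(Y,H)$ with $\pi(C(X))=C(Y)$, which is precisely statement $(2)$.

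For the converse $(2)\Rightarrow(1)$, starting from such a $\pi$ the Renault--Matui correspondence (used in the direction ``$C(X)$-preserving isomorphism $\Rightarrow$ continuous orbit equivalence'') gives that $(X,G)$ and $(Y,H)$ are continuously orbit equivalent. Because $(Y,H)$ is an $H$-odometer with $H$ finitely generated residually finite and $(X,G)$ is a free system, Theorem \ref{TheoremMainRigidity}(1) applies and tells me that $(X,G)$ is conjugate to a free odometer that is structurally conjugate to $(Y,H)$.

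The step I expect to require the most care is the final one: Theorem \ref{TheoremMainRigidity}(1) only asserts that $(X,G)$ is \emph{conjugate to} an odometer which is structurally conjugate to $(Y,H)$, whereas statement $(1)$ requires $(X,G)$ \emph{itself} to be structurally conjugate to $(Y,H)$. To close this gap I would argue that structural conjugacy is an invariant of the conjugacy class of an odometer: if a decreasing sequence $\{G_n\}$ of finite-index subgroups determines a $G$-odometer $(X',G)$ and $(X,G)$ is $G$-equivariantly conjugate to $(X',G)$, then the very same sequence $\{G_n\}$ determines $(X,G)$, since ``determines'' in Definition \ref{DefinitionStructurConj} is understood up to topological conjugacy and $\lim_n(G/G_n,\pi_n)$ is a canonical model for both systems. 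Transporting the data $\theta$, $\{G_n\}$, $\{H_n\}$ of Definition \ref{DefinitionStructurConj} through this conjugacy then exhibits $(X,G)$ as structurally conjugate to $(Y,H)$, completing the argument.
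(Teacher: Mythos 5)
Your proposal is correct and follows exactly the paper's route: the paper derives this corollary in one line by combining Theorem \ref{TheoremMainRigidity} with the Renault--Matui equivalence between $C(X)$-preserving isomorphism of reduced crossed products and continuous orbit equivalence. Your extra care in the last step (passing from ``conjugate to an odometer structurally conjugate to $(Y,H)$'' to ``structurally conjugate to $(Y,H)$'') resolves a point the paper leaves implicit, and your resolution matches the paper's own usage of ``determines'' up to topological conjugacy in Definition \ref{DefinitionStructurConj}.
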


We finish the section by presenting two structurally conjugate, but not conjugate  $\mathbb Z^2$-odometers. We note that similar examples were independently constructed in  \cite{Li:2015}.

\begin{example} Let $A_0$ and $B_0$ be the $2\times 2$ matrices given by
 $$
A_0=\left( \begin{array}{cc}
                              4 & 1\\
                               0   & 1
                               \end{array}
                                \right)
                                 \mbox{ and } B_0= \left(\begin{array}{cc}
                                                                      2&0\\
                                                                      0& 2\\
                                                                      \end{array}\right).
                                                                      $$
Set $$A_n = A_0^n\mbox{ and }B_n = B_0A_0^{n-1}.$$ Let $G = H = \mathbb Z^2$ and $G_n = A_n\mathbb Z^2$ and $H_n = B_n\mathbb Z^2$.  Consider the odometers $(X,G)$ and $(Y,H)$  determined by the sequences of normal subgroups $\{G_n\}_{n\geq 0}$ and $\{H_n\}_{n\geq 0}$, respectively. Note that $[G:G_0] = [H:H_0]$. Set $\theta = A_0 B_0^{-1}$. Then $\theta: G_0\rightarrow H_0$ is a group isomorphism. Furthermore, $\theta(G_n) = H_n$ for every $n\geq 1$. Thus, the odometers $(X,G)$ and $(Y,H)$ are structurally conjugate.

Assume that the dynamical systems $(X,G)$ and $(Y,H)$ are topologically conjugate. This means that  there  exists a group isomorphism (viewed as a matrix) $\Lambda:\ZZ^2\to \ZZ^2$  and a homeomorphism $\alpha: X \rightarrow Y$ such that $\alpha(g\cdot x) = \Lambda(g)\cdot \alpha(x)$ for every $g\in G$ and $x\in X$. In other words, the sequences of subgroups $\{\Lambda(G_n)\}_{n\geq 0}$ and $\{H_n\}_{n\geq 0}$ define the same, up to a space homeomorphism, odometers. These odometers are uniquely determined by the sequences of matrices $\{\Lambda A_n \}_{n\geq 0}$ and $\{B_n\}_{n\geq 0}$.
By \cite[Lemma 2]{Cor06} there exist a matrix $P\in GL_2(\mathbb Z)$ and $n\geq 0$ such that $B_0P=\Lambda A_n$.  Hence $2 \Lambda^{-1}P = A_0^{n}$. It follows that the entries of $A_0^{n}$ are divisible by 2, which is a contradiction. Therefore, $(X,G)$ and $(Y,H)$ cannot be topologically conjugate.
\end{example}

%
%
%

%
%

\section{Topological Full Groups}\label{SectionTopFullGroups}

This section is devoted to the study of topological full groups of $G$-odometers. We will show that topological full groups ``know'' when the underlying dynamical systems are odometers. We also show that  the topological full group of a $G$-odometer is amenable if and only if $G$ is amenable. In particular, this implies that the topological full group of a product of $\mathbb Z$-odometers is amenable since such systems can be obtained as $\mathbb Z^d$-odometers using the diagonal scale matrices (see \cite{Cor06} for more details).

Let $(X,G)$ be a  Cantor minimal  system. Denote by $[[G]]$ the group of homeomorphisms $s : X\rightarrow X$ such that for every $x\in X$ there exists a clopen neighbourhood $U$ of $x$ and an element $g\in G$ such that $s(y) = g(y)$ for every $y\in U$.

\begin{definition} The group $[[G]]$ is called the {\it topological full group of}  $(X,G)$.
\end{definition}

We refer the reader to the paper \cite{GrigorchukMedynets} surveying algebraic properties of full groups. The following result shows that the topological full group $[[G]]$ is a complete invariant of continuous orbit equivalence \cite[Remark 2.11]{Med11}. We note that the original result \cite{Med11} was established under much weaker assumptions than those presented here.

\begin{theorem}\label{TheoremIsomorphismFullGroups} Let $(X,G)$ and $(Y,H)$ be Cantor minimal systems. Then $(X,G)$ and $(Y,H)$ are continuously orbit equivalent if and only if the topological full groups $[[G]]$ and $[[H]]$ are isomorphic as abstract groups.

Furthermore, for every group isomorphism $\alpha :[[G]] \rightarrow [[H]]$ there exists a homeomorphism $\Lambda : X\rightarrow Y$ such that $\alpha(g) = \Lambda \circ g \circ \Lambda^{-1}$ for all $g\in [[G]]$.
\end{theorem}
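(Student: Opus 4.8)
The plan is to prove both directions, with the forward implication being routine and the reverse implication — together with the spatial realization asserted in the ``Furthermore'' clause — carrying all the content through a reconstruction argument of Rubin type. First I would dispatch the easy direction. If $(X,G)$ and $(Y,H)$ are continuously orbit equivalent via a homeomorphism $\psi : X\rightarrow Y$, then $\psi$ carries each $G$-orbit onto an $H$-orbit and, locally, transports each element of $G$ to an element of $H$. A direct check then shows that conjugation $g\mapsto \psi\circ g\circ\psi^{-1}$ maps $[[G]]$ onto $[[H]]$: an element of $[[G]]$ locally coincides with elements of $G$, so its $\psi$-conjugate locally coincides with elements of $H$, and symmetrically. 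Hence $\alpha(g):=\psi g \psi^{-1}$ is the desired isomorphism, and it is already spatially implemented. So the remaining work is to show that an \emph{abstract} isomorphism $\alpha$ forces the existence of such a $\psi$.

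For the converse I would recover the space $X$ (its topology and its points) purely from the algebraic structure of $[[G]]$, and likewise $Y$ from $[[H]]$, in a way that any group isomorphism $\alpha:[[G]]\rightarrow[[H]]$ is forced to carry one reconstruction onto the other. To each $g\in[[G]]$ one associates its support, the closure of $\{x : g\cdot x\neq x\}$, which in the Cantor minimal full-group setting is clopen. The first task is to express the containment and disjointness relations among supports in purely group-theoretic language: disjoint supports force commutation, while nesting and disjointness are detected through iterated commutators and centralizers. The fullness of $[[G]]$ together with minimality of the action guarantees that the group is \emph{locally moving} — for every nonempty clopen $U$ there is a nontrivial element supported in $U$, indeed an involution swapping two clopen halves of any clopen set — which is exactly the richness needed to run the reconstruction. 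From the support relations one rebuilds the Boolean algebra of clopen subsets of $X$, and then recovers the points of $X$ as the appropriate ultrafilters of this algebra, together with the Cantor topology.

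Because $\alpha$ preserves every group-theoretic relation used in the reconstruction, it induces an isomorphism of the reconstructed Boolean algebras of $X$ and $Y$, and hence a homeomorphism $\Lambda:X\rightarrow Y$. I would then check that $\alpha$ is implemented by $\Lambda$, that is, $\alpha(g)=\Lambda\circ g\circ\Lambda^{-1}$ for all $g\in[[G]]$; this is the content of the ``Furthermore'' clause and follows from the fact that a spatially trivial automorphism of a locally moving full group must be trivial. Finally, applying the identity to $g$ ranging over $G\subseteq[[G]]$: since $\Lambda g\Lambda^{-1}=\alpha(g)\in[[H]]$, the homeomorphism $\Lambda$ sends $G$-orbits into $H$-orbits and locally matches elements of $G$ with elements of $H$, and symmetrically for $\Lambda^{-1}$, which is precisely a continuous orbit equivalence.

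The main obstacle is the middle step: verifying that $[[G]]$ satisfies the hypotheses of the reconstruction machinery and that the support lattice is genuinely recoverable from the abstract group. The two delicate points are (a) showing that the action of $[[G]]$ on $X$ is faithful and locally moving, with enough involutions interchanging clopen pieces — this rests on minimality and the Cantor structure, but must be argued uniformly in $G$ rather than just for $G=\mathbb{Z}$; and (b) giving a characterization of the support relations robust enough that an abstract isomorphism is forced to respect them. Once the reconstruction is in place, the spatial realization of $\alpha$ and the passage back to continuous orbit equivalence are formal.
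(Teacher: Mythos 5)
The paper never proves this theorem: it is imported from Medynets' reconstruction paper (cited as \cite[Remark 2.11]{Med11}), so there is no in-paper argument to compare against. Your plan --- the routine conjugation argument for the forward direction, a Rubin-type reconstruction of the space from the abstract group for the converse, and then reading continuous orbit equivalence off the spatial identity $\alpha(g)=\Lambda\circ g\circ\Lambda^{-1}$ with $g$ ranging over $G$ --- is precisely the strategy of that cited source, and it is sound; the local-moving property of $[[G]]$ does follow from minimality via the involutions you describe. One small caution: your parenthetical claim that supports of elements of $[[G]]$ are clopen is automatic only when the action is (topologically) free, whereas the theorem is stated for general minimal systems; this is harmless because the reconstruction machinery works with regular open sets rather than clopen supports, but as written that step should not be asserted without the freeness hypothesis.
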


Combining Theorems \ref{TheoremIsomorphismFullGroups} and \ref{TheoremMainRigidity}, we immediately obtain the following results, which, in particular, show that topological full groups ``know'' when the underlying systems are equicontinuous.

\begin{corollary}\label{CorollaryFullGroupsVirtuallyConjugate} Let $(X,G)$ and $(Y,H)$ be  free odometers with $G$ and $H$ finitely generated residually finite groups.  Then the
topological full groups $[[G]]$ and $[[H]]$ are isomorphic as abstract groups if and only if the systems $(X,G)$ and $(Y,H)$ are structurally  conjugate.
\end{corollary}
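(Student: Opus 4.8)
The plan is to combine the two reconstruction results already established in the excerpt, using continuous orbit equivalence as the bridge between them. The target statement asserts an equivalence: isomorphism of the topological full groups $[[G]]$ and $[[H]]$ is equivalent to structural conjugacy of the odometers $(X,G)$ and $(Y,H)$. I would deduce this by chaining Theorem \ref{TheoremIsomorphismFullGroups} (which relates full-group isomorphism to continuous orbit equivalence) with Theorem \ref{TheoremMainRigidity} (which relates continuous orbit equivalence to structural conjugacy), so the proof is essentially the concatenation of two biconditionals across the common middle term of continuous orbit equivalence.

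For the forward direction, suppose $[[G]]$ and $[[H]]$ are isomorphic as abstract groups. Since $(X,G)$ and $(Y,H)$ are free odometers, they are in particular minimal Cantor systems, so Theorem \ref{TheoremIsomorphismFullGroups} applies and yields that $(X,G)$ and $(Y,H)$ are continuously orbit equivalent. Now I invoke Theorem \ref{TheoremMainRigidity}(1): the hypothesis there requires one of the systems to be an odometer over a finitely generated residually finite group (which is exactly our standing assumption on $H$) and the other to be a free system, which $(X,G)$ is. The conclusion of Theorem \ref{TheoremMainRigidity}(1) gives that $(X,G)$ is conjugate to a free odometer that is structurally conjugate to $(Y,H)$; since $(X,G)$ is already a free odometer, structural conjugacy of $(X,G)$ and $(Y,H)$ follows directly.

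For the converse, suppose $(X,G)$ and $(Y,H)$ are structurally conjugate. Then Theorem \ref{TheoremMainRigidity}(2) immediately gives that the two systems are continuously orbit equivalent. Applying the easy direction of Theorem \ref{TheoremIsomorphismFullGroups} then produces an isomorphism $[[G]] \cong [[H]]$ of abstract groups, completing the equivalence. The only point requiring any care is the verification that our hypotheses match the precise input conditions of the two cited theorems — in particular that ``free odometer'' supplies both the minimality needed for Theorem \ref{TheoremIsomorphismFullGroups} and the freeness and odometer structure needed for Theorem \ref{TheoremMainRigidity} — but there is no genuine obstacle here, since the corollary is stated precisely so that the hypotheses of both theorems hold simultaneously. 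The result is therefore a formal corollary obtained by transitivity through continuous orbit equivalence, and I would expect the write-up to be a single short paragraph rather than an involved argument.
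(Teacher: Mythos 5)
Your proposal is correct and follows essentially the same route as the paper, which derives this corollary ``immediately'' by combining Theorem \ref{TheoremIsomorphismFullGroups} with Theorem \ref{TheoremMainRigidity}, using continuous orbit equivalence as the common middle term exactly as you do. Your extra care in checking that the hypotheses (minimality of odometers for Theorem \ref{TheoremIsomorphismFullGroups}, freeness and finite generation for Theorem \ref{TheoremMainRigidity}) line up, and that structural conjugacy passes through conjugacy of $(X,G)$ with the odometer produced by Theorem \ref{TheoremMainRigidity}(1), is sound and only makes explicit what the paper leaves implicit.
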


\begin{corollary} Let $(X,G)$ be a free minimal equicontinuous system. Let $(Y,H)$ be a free dynamical system. Suppose that the topological full groups $[[G]]$ and $[[H]]$ are isomorphic as abstract groups. Then $(Y,H)$ is minimal and equicontinuous.
\end{corollary}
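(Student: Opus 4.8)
The plan is to reduce the statement to a continuous orbit equivalence with the odometer $(X,G)$, read off minimality almost for free, and then upgrade the orbit equivalence to a conjugacy with an $H$-odometer, from which equicontinuity follows.

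First I would invoke the reconstruction part of Theorem \ref{TheoremIsomorphismFullGroups}. The spatial realization of an abstract isomorphism $\alpha : [[G]] \to [[H]]$ as a homeomorphism $\Lambda : X \to Y$ with $\alpha(g) = \Lambda \circ g \circ \Lambda^{-1}$ requires, following the stronger form of the result in \cite{Med11}, only that $(X,G)$ be minimal and $(Y,H)$ be topologically free; the assumed freeness of $(Y,H)$ supplies the latter. Such a $\Lambda$ carries $G$-orbits bijectively onto $H$-orbits, hence is an orbit equivalence. Since $(X,G)$ is minimal, every $G$-orbit is dense, and a homeomorphism preserves density, so each $H$-orbit $\Lambda(G\cdot x)$ is dense in $Y$. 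Therefore $(Y,H)$ is minimal, and we may now legitimately apply Theorem \ref{TheoremIsomorphismFullGroups} in full to conclude that $(X,G)$ and $(Y,H)$ are continuously orbit equivalent.

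By Theorem \ref{TheoremEquicontinuousConjugateOdometers} we may assume $(X,G)$ is a $G$-odometer, equipped with its canonical decreasing tower of clopen cylinder partitions $\{\mathcal P_n\}$ that are permuted by $G$ and generate the topology; in particular every point of $X$ is regularly recurrent. Since odometers are equicontinuous and, by Proposition \ref{PropositionRegularlyRecurrent}, a minimal system all of whose points are regularly recurrent is conjugate to an odometer, it suffices to prove that the now-minimal system $(Y,H)$ has every point regularly recurrent. Identifying $Y$ with $X$ via $\Lambda$, we are reduced to the following concrete problem: $G$ and $H$ act freely on the same Cantor set $X$ with the same orbits and continuous orbit cocycle $f : H\times X \to G$ determined by $f(h,x)\cdot x = h\cdot x$, the system $(X,G)$ is an odometer, and we must show that for each $x$ and each cylinder $C=[x]_n$ with $G$-return subgroup $G_n=\{g : g\cdot x\in C\}$, the set of $H$-return times $\{h\in H : h\cdot x\in C\}=\{h : f(h,x)\in G_n\}$ contains a finite-index subgroup of $H$. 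Once this is established, Lemma \ref{LemmaConjugacyInducedOdometers} promotes the resulting tower structure on $Y$ to a conjugacy of $(Y,H)$ with an $H$-odometer, whence $(Y,H)$ is equicontinuous.

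The route I would take mirrors the argument of Theorem \ref{TheoremMainRigidity}(1): restricted to the return times, $h\mapsto f(h,x)$ should be, up to finite index, a homomorphism onto a finite-index subgroup of $G$, so that the corresponding subgroup of $H$ stabilizes $C$ and acts on it as a $G_n$-odometer. The main obstacle is that this step, as carried out in Theorem \ref{TheoremMainRigidity}(1), assumes $H$ finitely generated, whereas the present corollary does not: finite generation was used to make the cocycles $f(s_i,\cdot)$ over a finite generating set simultaneously constant on a single deep clopen partition, i.e. to upgrade the pointwise local-constancy of $f$ to uniform local-constancy over all of $H$. Because $f(\cdot,x)$ is merely a bijection and not a homomorphism, the preimage $\{h : f(h,x)\in G_n\}$ of a finite-index subgroup need not itself contain one, and removing finite generation is precisely the heart of the proof.

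To close this gap I would run the stabilization cylinder-by-cylinder rather than globally, exploiting three facts: on a fixed cylinder only finitely many values of the relevant cocycles occur, by compactness; equicontinuity of $(X,G)$ collapses these to a single value on a sufficiently deep subcylinder; and the full-group conjugacy forces $H$ to permute the cylinders of the profinite tower $\{\mathcal P_n\}$. Combining these should produce, for each cylinder, a finite-index subgroup of $H$ of return times to it, and hence regular recurrence of every point of $(Y,H)$, without ever fixing a finite generating set. I expect the delicate point to be showing that these cylinder-local homomorphisms assemble into a genuine finite-index subgroup of $H$ uniformly across the tower, which is exactly where the equicontinuity of $(X,G)$ must be leveraged in place of finite generation.
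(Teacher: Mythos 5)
Your first half is correct and is exactly the paper's route: the spatial realization of $\alpha$ from \cite{Med11} gives a homeomorphism $\Lambda$ carrying $G$-orbits onto $H$-orbits (since $[[G]]$-orbits coincide with $G$-orbits), minimality of $(Y,H)$ follows, and continuity of the orbit equivalence is immediate from the definition of $[[H]]$. The gap is in the second half, and it is not a gap you can close: you correctly spotted that Theorem \ref{TheoremMainRigidity}(1) needs the acting group of the odometer to be finitely generated, but the program you outline for removing that hypothesis must fail, because the corollary is false without it. Concretely, let $G=\bigoplus_{n\geq 1}\mathbb{Z}/2\mathbb{Z}$ act on $X=\prod_{n\geq 1}\{0,1\}$ by coordinatewise addition; this is a free minimal equicontinuous $G$-odometer. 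Let $H=\mathbb{Z}(2^{\infty})$ be the Pr\"ufer $2$-group, acting on the same $X$ by ``reversed adding machines'': the distinguished element of order $2^k$ adds $1$ modulo $2^k$ to $(x_1,\dots,x_k)$ read as a binary integer with $x_k$ the \emph{least} significant digit (carries propagate toward coordinate $1$ and truncate there) and fixes all coordinates beyond $k$; this is consistent because twice the generator of order $2^{k+1}$ acts as the generator of order $2^k$. This action is free, its orbits are exactly the tail-equivalence classes, i.e.\ the $G$-orbits, and on every level-$k$ cylinder each element of $H$ is a fixed finite coordinate change while each element of $G$ agrees with an element of $H$; hence $[[G]]=[[H]]$ as groups of homeomorphisms. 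Yet $(X,H)$ is free, minimal, and \emph{not} equicontinuous: taking $x=1^{M+1}z$, $y=1^{M}0z$ and $h$ the generator of order $2^{M+1}$, the points $x,y$ agree in the first $M$ coordinates while $h\cdot x$ and $h\cdot y$ differ already in coordinate $1$. (Alternatively: $H$ is divisible, hence has no proper finite-index subgroups and is not residually finite, so by Theorem \ref{TheoremEquicontinuousConjugateOdometers} it admits no free minimal equicontinuous Cantor system at all.)

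The counterexample also pinpoints the false premise in your sketch: the claim that ``the full-group conjugacy forces $H$ to permute the cylinders of the profinite tower $\{\mathcal P_n\}$'' is only true element by element -- each fixed $h$ permutes all sufficiently deep cylinders -- and there need be no single level whose atoms are permuted by all of $H$ simultaneously (above, the generator of order $2^k$ fails to permute the atoms of $\mathcal P_n$ for every $n<k$). That uniformity over $H$ is precisely what finite generation purchases in the proof of Theorem \ref{TheoremMainRigidity}(1) (constancy of the finitely many generator cocycles on one common partition, then induction on word length); it is equivalent to the regular recurrence you are trying to establish, and it genuinely fails here. So the corollary has to be read with the standing hypothesis of Corollary \ref{CorollaryFullGroupsVirtuallyConjugate} that the group of the equicontinuous system is finitely generated -- the paper's ``immediately obtain'' glosses over this -- and under that hypothesis your first half followed by a direct application of Theorem \ref{TheoremMainRigidity}(1), with the roles of the two systems swapped, finishes the proof; this is the paper's intended argument, and no cylinder-by-cylinder refinement of it is possible.
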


 Consider a free exact  $G$-odometer $(X,G)$ determined by a sequence of finite-index subgroups $(G_i)_{i\geq 0}$.
 For every $n\geq 0$, let $C_n$ be the subset of all $x=(x_k)_{k\geq 0}\in X$ such that
 $x_n = e_n$. The collection $\P_n=\{f\cdot C_n: f\in F_n\}$ is a clopen partition of $X$, where $F_n$ is a  set of representatives of $G/G_n$. We note that the partition $\P_n$ is independent of the choice of $F_n$. The family of partitions $(\P_n)_{n\geq 0}$ spans the topology of $X$.  Note also that the elements of $G$ permutes the atoms of $\P_n$.

  Let $[[G]]$ be the topological full group of $(X,G)$.   For each $\gamma \in [[G]]$ and $x\in X$, let  $f(\gamma,x)\in G$ be such that $f(\gamma,x)\cdot x= \gamma \cdot x$. Since the group $G$ acts freely, the function $f : [[G]]\times X\rightarrow G$ is well-defined. Note that $f$ satisfies the cocycle identity: \begin{equation}\label{EqCocycleIdentity}f(\gamma_1\gamma_2,x)  = f(\gamma_1,\gamma_2\cdot x) f(\gamma_2,x)\mbox{ for every }\gamma_1,\gamma_2\in G\end{equation}

For every $n\geq 0$, denote by
  $[[G]]_n$ the set of all $\gamma\in [[G]]$ such that the cocycle $f(\gamma,\cdot)$ is $\P_n$-compatible, i.e., constant on atoms of $\P_n$. Using the cocycle identity (\ref{EqCocycleIdentity}) and the fact that the group $G$ permutes the atoms of $\P_n$, we see that $[[G]]_n$ is a group.  Setting $\P_0=\{X\}$, we have that $[[G]]_0=G$. The following result follows from the fact that  $(\P_n)_{n\geq 0}$ is a nested sequence of partitions spanning the topology of $X$.

\begin{proposition}\label{PropositionFullGroupsIncreasingUnion}$[[G]] = \bigcup_{n\geq 0}[[G]]_n$ and $[[G]]_n\subset [[G]]_{n+1}$ for every $n\geq 0$.
\end{proposition}

Denote by $S_p$ the symmetric group on $p$ elements.

 \begin{proposition}\label{PropositionOdometer-direct-limit} Suppose  that $(X,G)$ is a free  exact free odometer defined by a sequence of normal subgroups $\{G_n\}_{n=0}^\infty$.

 (1) The group $[[G]]_n$ is isomorphic to the semidirect product $G_n^{[G:G_n]}\rtimes S_{[G:G_n]}$.

(2) The topological full group $[[G]]$  is isomorphic to the inductive limit $$\varinjlim(G_n^{[G:G_n]}\rtimes S_{[G:G_n]},\tau_n).$$
\end{proposition}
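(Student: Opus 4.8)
The plan is to study $[[G]]_n$ via its action on the finite partition $\P_n$, which has $p := [G:G_n]$ atoms, and then to pass to the limit using Proposition \ref{PropositionFullGroupsIncreasingUnion}. Enumerate the atoms as $C_n^{(0)},\ldots,C_n^{(p-1)}$, where $C_n^{(j)} = f_j\cdot C_n$ for a fixed system $\{f_j\}$ of representatives of $G/G_n$. Since every element of $G$ permutes these atoms and the cocycle $f(\gamma,\cdot)$ of any $\gamma\in[[G]]_n$ is constant on each atom, each $\gamma$ permutes the atoms; this gives a homomorphism $\rho:[[G]]_n\to S_p$. First I would show $\rho$ is split surjective: for $\sigma\in S_p$ define $\gamma_\sigma$ to coincide on $C_n^{(j)}$ with $f_{\sigma(j)}f_j^{-1}$, so that $\gamma_\sigma(C_n^{(j)})=C_n^{(\sigma(j))}$ and $\gamma_\sigma\in[[G]]_n$. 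The computation $\gamma_\sigma\gamma_\tau|_{C_n^{(j)}}=f_{\sigma\tau(j)}f_{\tau(j)}^{-1}f_{\tau(j)}f_j^{-1}=f_{\sigma\tau(j)}f_j^{-1}=\gamma_{\sigma\tau}|_{C_n^{(j)}}$ shows $\sigma\mapsto\gamma_\sigma$ is a homomorphic section of $\rho$.

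Next I would identify $\ker\rho$. If $\gamma$ fixes every atom, then on $C_n^{(j)}$ it coincides with an element stabilizing $C_n^{(j)}$; since $G_n$ is the return-time subgroup of $C_n$ and is normal, the stabilizer of each atom $f_j\cdot C_n$ equals $f_jG_nf_j^{-1}=G_n$. Recording the local elements $g_0,\ldots,g_{p-1}\in G_n$ then gives an isomorphism $\ker\rho\cong G_n^{\,p}$, with componentwise composition. Together with the section this yields a split exact sequence $1\to G_n^{\,p}\to[[G]]_n\to S_p\to 1$. To pin down the action, conjugate a kernel element by $\gamma_\sigma$ and pass to the coordinates $\tilde g_j:=f_{\sigma(j)}^{-1}g_jf_j\in G_n$; the representatives cancel precisely because $G_n$ is normal, and one finds that conjugation by $\gamma_\sigma$ permutes these $p$ coordinates. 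Hence $[[G]]_n$ is the permutational wreath product $G_n\wr S_p=G_n^{[G:G_n]}\rtimes S_{[G:G_n]}$, proving (1).

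For (2), Proposition \ref{PropositionFullGroupsIncreasingUnion} gives that $[[G]]$ is the increasing union $\bigcup_{n\geq 0}[[G]]_n$, hence the inductive limit of the subgroups $[[G]]_n$ along the inclusions. Transporting these inclusions $[[G]]_n\hookrightarrow[[G]]_{n+1}$ through the isomorphisms of part (1) produces connecting homomorphisms $\tau_n:G_n^{[G:G_n]}\rtimes S_{[G:G_n]}\to G_{n+1}^{[G:G_{n+1}]}\rtimes S_{[G:G_{n+1}]}$ and an isomorphism $[[G]]\cong\varinjlim(G_n^{[G:G_n]}\rtimes S_{[G:G_n]},\tau_n)$. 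The maps $\tau_n$ are read off from how $\P_{n+1}$ refines $\P_n$: each $\P_n$-atom splits into $[G_n:G_{n+1}]$ atoms of $\P_{n+1}$, so a permutation in $S_{[G:G_n]}$ inflates to one in $S_{[G:G_{n+1}]}$ preserving this block structure, while each factor $G_n$ embeds into the corresponding $G_{n+1}^{[G_n:G_{n+1}]}\rtimes S_{[G_n:G_{n+1}]}$ exactly as in part (1) applied to the induced $G_n$-odometer on $C_n$.

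The crux, and the only real obstacle, is part (1): correctly determining the kernel and the $S_p$-action. Normality of the $G_n$ is used twice and essentially. It forces every atom-stabilizer to equal $G_n$, so that $\ker\rho$ is a genuine direct power $G_n^{\,p}$ rather than a product of conjugate subgroups, and it allows the representatives $f_j$ to be absorbed so that the induced action is a clean coordinate permutation rather than a permutation twisted by conjugations. Once this is settled, part (2) is essentially formal, the only labor being the bookkeeping required to write the connecting maps $\tau_n$ explicitly from the refinement $\P_{n+1}\prec\P_n$.
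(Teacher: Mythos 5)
Your proof is correct and takes essentially the same route as the paper's: the homomorphism $[[G]]_n\rightarrow S_{[G:G_n]}$ induced by the permutation of the atoms of $\P_n$, the identification of its kernel with $G_n^{[G:G_n]}$ via normality of $G_n$, and Proposition \ref{PropositionFullGroupsIncreasingUnion} for the inductive limit; in fact you make explicit the splitting $\sigma\mapsto\gamma_\sigma$ and the coordinate-permutation action, both of which the paper leaves implicit. One index slip in that extra verification: the coordinates in which conjugation by $\gamma_\sigma$ becomes a pure permutation are $h_j:=f_j^{-1}g_jf_j\in G_n$ (independent of $\sigma$), not $\tilde g_j=f_{\sigma(j)}^{-1}g_jf_j$, which for $\sigma(j)\neq j$ lies in the coset $G_nf_{\sigma(j)}^{-1}f_j\neq G_n$; with $h_j$ one computes $\gamma_\sigma\kappa\gamma_\sigma^{-1}|_{f_{\sigma(j)}\cdot C_n}=\bigl(f_{\sigma(j)}h_jf_{\sigma(j)}^{-1}\bigr)|_{f_{\sigma(j)}\cdot C_n}$, so the tuple $(h_j)$ is indeed permuted by $\sigma$ as you claim.
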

\begin{proof}
Let $\{C_n\}$  and $\{\P_n\}$ be as above.  Recall that the group $G$ permutes atoms of $\P_n$. Consider $s\in [[G]]_n$. Then the orbit cocycle $f(s,\cdot)$ is constant on every atom of $\P_n$. Fix a family of representatives $F_n$ defining the partition $\mathcal P_n$.

For  $g\in F_n$ and $x\in C_n$, denote  $f(s,g\cdot x)$ by $u_g\in G$. Then $$s\cdot (g\cdot x) = f(s,gx)g\cdot x = u_gg\cdot x = g'q\cdot x,$$ where $g'\in F_n$ and $q\in G_n$. This shows that  $s$ permutes atoms of $\P_n$. Denote by $i_n : [[G]]_n\rightarrow S_{\P_n}$ the induced homomorphism. Note that $S_{\P_n} \cong S_{[G:G_n]}$.

The kernel of $i_n$ is the subgroup of $[[G]]_n$ that stabilizes every atom of $\P_n$. Since the groups $\{G_n\}$ are normal, $G_n$ is the set of return times to $f\cdot C_n$ for any  $f\in F_n$. Thus, for every $s\in \mathrm{Ker}(\psi)$ and every $q\cdot C_n$, $q\in F_n$, we have that $s|_{q(C_n)} = q'|_{q(C_n)}$ for some $q'\in G_n$. It follows that
$$\mathrm{Ker}(\psi) \cong G_n^{[G:G_n]}.$$
Therefore, $[[G]]_n$ is isomorphic to the semidirect product $G_n^{[G:G_n]}\rtimes S_{[G:G_n]}$. Applying Proposition \ref{PropositionFullGroupsIncreasingUnion} we complete the proof.
\end{proof}

The following result is a corollary of Proposition \ref{PropositionOdometer-direct-limit}. We recall that semidirect products of amenable groups and inductive limits of amenable groups are amenable. For more details on properties of amenable groups, see, for example, \cite[Section 4.5]{CC10}.

\begin{corollary}\label{amenable}
The topological full group $[[G]]$ of a free exact odometer $(X,G)$  is amenable if and only if $G$ is amenable.
\end{corollary}

We will need the following folklore result.
\begin{lemma}\label{LemmaEmbeddingFullGroups} Let $(X,G)$ be a Cantor system. Suppose that $(Y,G)$ is a factor of $(X,G)$. Then the topological full group of $(Y,G)$ embeds into the topological full group of $(X,G)$.
\end{lemma}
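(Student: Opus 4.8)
The plan is to lift each element of the topological full group of the factor $(Y,G)$ to a homeomorphism of $X$ by pulling back its local structure along the factor map, and to check that this assignment is an injective group homomorphism. Let $\pi : X \to Y$ denote the factor map; it is continuous, surjective, and equivariant, $\pi(g\cdot x) = g\cdot\pi(x)$ for all $g\in G$ and $x\in X$. Write $[[G]]_Y$ and $[[G]]_X$ for the two full groups. Given $s\in[[G]]_Y$, compactness of $Y$ and the definition of the full group furnish a finite clopen partition $Y = V_1\sqcup\cdots\sqcup V_k$ and elements $g_1,\dots,g_k\in G$ with $s|_{V_i} = g_i|_{V_i}$. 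Since $\pi$ is continuous, the sets $U_i = \pi^{-1}(V_i)$ form a finite clopen partition of $X$, and I define $\tilde s : X\to X$ by $\tilde s|_{U_i} = g_i|_{U_i}$. By construction $\tilde s$ agrees locally with elements of $G$, so it is continuous; once it is shown to be a bijection it lies in $[[G]]_X$, and I set $\Phi(s) = \tilde s$.

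The verifications I would carry out, in order, are the following. First, $\tilde s$ is injective: if $\tilde s(x) = \tilde s(x')$ with $x\in U_i$ and $x'\in U_j$, then applying $\pi$ and equivariance gives $s(\pi(x)) = s(\pi(x'))$, whence $\pi(x) = \pi(x')$ because $s$ is injective; this common point lies in $V_i\cap V_j$, forcing $i=j$, and then $g_i\cdot x = g_i\cdot x'$ yields $x=x'$. Second, $\Phi$ is multiplicative, which I would check by a germ computation: if near $\pi(x)$ one has $t=g$ and near $t(\pi(x))$ one has $s=g'$, then $st = g'g$ near $\pi(x)$, while $\tilde s(\tilde t(x)) = \tilde s(g\cdot x) = g'g\cdot x = \widetilde{st}(x)$, using $\pi(g\cdot x) = g\cdot\pi(x)$ to place $g\cdot x$ in the atom of the lifted partition on which $s$ is represented by $g'$. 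Third, surjectivity of $\tilde s$ (hence that it is a homeomorphism) is then automatic: multiplicativity gives $\Phi(s)\Phi(s^{-1}) = \Phi(\mathrm{id}_Y) = \mathrm{id}_X$, so $\tilde s$ is invertible with continuous inverse $\widetilde{s^{-1}}\in[[G]]_X$. Finally, $\Phi$ is injective: if $\tilde s = \mathrm{id}_X$, then for any $y\in Y$ choose $x\in\pi^{-1}(y)$ and compute $s(y) = s(\pi(x)) = \pi(\tilde s(x)) = \pi(x) = y$, using surjectivity of $\pi$.

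The main obstacle is the well-definedness of $\tilde s$, i.e.\ its independence of the chosen partition and representatives $\{g_i\}$; this is the one delicate point, and it is exactly what makes $\Phi$ a genuine function rather than a multivalued assignment. Concretely, if $g|_V = h|_V = s|_V$ on a clopen set $V\subset Y$, one needs $g\cdot x = h\cdot x$ for every $x\in\pi^{-1}(V)$, that is, $h^{-1}g$ must act trivially on $\pi^{-1}(V)$ whenever it fixes $V$ pointwise. This holds automatically when the action on $Y$ is topologically free, since then $g|_V = h|_V$ already forces $g=h$ as group elements. In every setting where this lemma is applied the factor $(Y,G)$ is a free odometer, so freeness of $(Y,G)$ disposes of the obstacle immediately; I would therefore invoke (topological) freeness of $(Y,G)$ at this step. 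With well-definedness secured, each of the remaining steps is a routine diagram chase through the equivariance relation $\pi\circ g = g\circ\pi$, and together they show that $\Phi$ is an injective homomorphism of $[[G]]_Y$ into $[[G]]_X$, as required.
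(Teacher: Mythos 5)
Your construction is exactly the paper's: pull the clopen partition of $Y$ back along the factor map $\pi$ and let the lift act by the same group elements on the preimages, then check that this assignment is an injective homomorphism. So in terms of approach there is nothing new; what differs is the level of care, and the difference cuts in your favor. The paper disposes of well-definedness with the single remark that ``since $\pi$ is $G$-equivariant, $\bar\pi(s)$ is well-defined,'' and calls the rest routine. But equivariance alone only shows that when $g|_V = h|_V$ on a clopen $V\subset Y$, the points $g\cdot x$ and $h\cdot x$ lie in the same fiber over $\pi^{-1}(V)$ --- not that they are equal (consider $G=\ZZ^2$ acting on $X = Y\times Z$ coordinatewise and on $Y$ through the first factor: $(1,0)$ and $(1,5)$ agree on $Y$ but not on $X$). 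Without independence of the chosen representatives, the map $s\mapsto\tilde s$ is not even single-valued, and multiplicativity can genuinely fail for an unlucky choice. You correctly isolated this as the one delicate point and resolved it by requiring $(Y,G)$ to be (topologically) free, which makes the local representative unique and the lift canonical; your remaining verifications (injectivity of $\tilde s$, the germ computation for multiplicativity, invertibility via $\Phi(s)\Phi(s^{-1})=\mathrm{id}$, and injectivity of $\Phi$ using surjectivity of $\pi$) are all sound. The price is that you prove a formally weaker statement than the lemma as written, which carries no freeness hypothesis on the factor. This is acceptable here: in the paper's only application (Corollary \ref{CorollaryFullGroupsAmenable}), the factor is the free odometer produced by Theorem \ref{TheoremEquicontinuousConjugateOdometers}, so your hypothesis is available --- and the paper's own argument, read literally, needs the same hypothesis to justify its well-definedness claim. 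In short: same construction, but your write-up identifies and patches a real gap in the paper's proof, at the cost of an added (and, for this paper, harmless) freeness assumption on $(Y,G)$.
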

\begin{proof} Let $\pi : (X,G) \rightarrow (Y,G)$ be a factor map. For $s\in [[(Y,G)]]$, find a clopen partition $Y = \bigsqcup Y_i$ and elements $\{g_i\}\subset G$ such that $s|_{A_i} = g_i|_{A_i}$. Define $\bar \pi(s)$ as an element of $[[(X,G)]]$ such that $$\bar \pi(s)|_{\pi^{-1}(Y_i)} = g_i|_{{\pi^{-1}(Y_i)}}.$$ Since $\pi$ is $G$-equivariant, $\pi(s)$ is well-defined.  Now it is routine to check that $\bar \pi$ is a group embedding.
\end{proof}

\begin{corollary}\label{CorollaryFullGroupsAmenable}
Let $(X,G)$ an  equicontinuous free minimal Cantor system. Then  $[[G]]$ is amenable if and only if $G$ is amenable.
\end{corollary}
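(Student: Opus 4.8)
The plan is to reduce the statement about general equicontinuous minimal systems to the case of exact odometers, where Corollary \ref{amenable} already gives the result, and then transport amenability along the factor maps established in Section \ref{SectionEquicontinuousSystems}. First I would invoke Theorem \ref{TheoremEquicontinuousConjugateOdometers}: since $(X,G)$ is free, equicontinuous, and minimal, it is topologically conjugate to a $G$-odometer, and topological full groups are conjugacy invariants (indeed, conjugate systems have isomorphic topological full groups by construction), so it suffices to prove the statement for a $G$-odometer. Thus from now on I assume $(X,G)$ is a $G$-odometer determined by a nested sequence $\{G_n\}$ of finite-index subgroups, not necessarily normal.

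The forward direction, that amenability of $[[G]]$ implies amenability of $G$, is the easy one: the acting group $G$ embeds into $[[G]]$ (each $g \in G$ is a homeomorphism that locally, in fact globally, coincides with $g$), and every subgroup of an amenable group is amenable, so $G$ is amenable. The substantive direction is the converse. Here the obstacle is that Corollary \ref{amenable} is stated only for \emph{exact} odometers, where the subgroups are normal; for a general $G$-odometer the clean semidirect-product decomposition of Proposition \ref{PropositionOdometer-direct-limit} is not available. The key tool to bridge this gap is the fact, recorded just after the definition of odometer, that \emph{every $G$-odometer is a factor of an exact $G$-odometer} (\cite[Proposition 1]{CP}), together with Lemma \ref{LemmaEmbeddingFullGroups}.

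So I would proceed as follows. Let $(X,G)$ be the given $G$-odometer with $G$ amenable. Let $(\widetilde X, G)$ be an exact $G$-odometer having $(X,G)$ as a factor, furnished by \cite[Proposition 1]{CP}; concretely one may take the sequence of normal cores $\widetilde G_n = \bigcap_{g\in G} g G_n g^{-1}$, each of finite index, to define $\widetilde X$. By Corollary \ref{amenable}, since $G$ is amenable and $(\widetilde X, G)$ is an exact free odometer, its topological full group $[[(\widetilde X, G)]]$ is amenable. By Lemma \ref{LemmaEmbeddingFullGroups}, the topological full group $[[G]] = [[(X,G)]]$ of the factor embeds into $[[(\widetilde X, G)]]$. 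Since a subgroup of an amenable group is amenable, $[[G]]$ is amenable.

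The main point to verify carefully is that the construction genuinely produces a \emph{free} exact odometer so that Corollary \ref{amenable} applies, and that $(X,G)$ is indeed free as required by the hypotheses; freeness of $(\widetilde X,G)$ follows because the normal cores satisfy $\bigcap_n \widetilde G_n \subset \bigcap_n G_n = \{e\}$ once the original odometer is free, using the characterization of freeness for exact odometers given in Section \ref{SectionEquicontinuousSystems}. I expect the only real subtlety to be bookkeeping around the passage from a possibly non-normal defining sequence to a normal one via cores, and confirming that the factor map from $(\widetilde X,G)$ onto $(X,G)$ is $G$-equivariant so that Lemma \ref{LemmaEmbeddingFullGroups} applies verbatim; everything else is a direct chaining of the cited results.
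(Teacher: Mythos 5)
Your proposal is correct and follows essentially the same route as the paper: reduce to a $G$-odometer via Theorem \ref{TheoremEquicontinuousConjugateOdometers}, embed $[[G]]$ into the full group of an exact odometer using \cite[Proposition 1]{CP} and Lemma \ref{LemmaEmbeddingFullGroups}, apply Corollary \ref{amenable}, and handle the converse by noting $G\subset [[G]]$. Your extra verification that the normal-core construction yields a \emph{free} exact odometer (via $\bigcap_n \widetilde G_n \subset \bigcap_n G_n = \{e\}$) is a point the paper's proof leaves implicit, and is a worthwhile addition.
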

\begin{proof} By Theorem \ref{TheoremEquicontinuousConjugateOdometers}, $(X,G)$ is conjugate to a free odometer.
By \cite[Proposition 1]{CP}, the dynamical system $(X,G)$ is a factor of an exact odometer.

Suppose that $G$ is amenable.  By Lemma \ref{LemmaEmbeddingFullGroups},
$[[G]]$ is isomorphic to a subgroup of the topological full group of an exact odometer. Hence,  Corollary \ref{amenable} implies that $[[G]]$ is amenable.

Conversely, if  $[[G]]$ is amenable, then $G$ is amenable as a subgroup of  $[[G]]$.
\end{proof}

%
%

%
%
%

%
%

\end{document}